\newtheorem{corollary}{Corollary}
\newtheorem{theorem}{Theorem}
\newtheorem{lemma}{Lemma}
\newtheorem{proposition}{Proposition}
\newtheorem{conjecture}{Conjecture}
\theoremstyle{remark}
\theoremstyle{plain}
\theoremstyle{remark}
\numberwithin{equation}{section}
\begin{document}

\title{A Note On The Mean Value of $L$--functions in Function Fields.}

\author{Julio C. Andrade}
\address{School of Mathematics, University of Bristol, Bristol BS8 1TW, UK}
\email{j.c.andrade@bristol.ac.uk}
\thanks{The author is supported by an Overseas Research Scholarship and an University of Bristol Research Scholarship.}


\subjclass[2010]{11G20 (Primary), 11R29, 14G10 (Secondary)}
\keywords{Mean Values of $L$--functions, finite fields, function fields, hyperelliptic curves, class numbers}

\begin{abstract}
An asymptotic formula for the sum $\sum L(1,\chi)$ is established for a family of hyperelliptic curves of genus $g$ over a fixed finite field $\mathbb{F}_q$ as $g\rightarrow\infty$ making use of the analogue of the approximate functional equation for such $L$--functions. As a corollary, we obtain a formula for the average of the class number of the associated rings $\mathbb{F}_{q}[T,\sqrt{D}]$. 
\end{abstract}

\maketitle

\section{Introduction}
In his Disquisitiones Arithmeticae \cite{5}, Gauss presented two conjectures concerning the average values of the class number associated with binary quadratic forms $ax^{2}+2bxy+cy^{2}$ where $a,b,c\in\mathbb{Z}$. For completeness, clarity and to put our problem in the right context we will restate the Gauss's conjectures. 

Let $D=4(b^{2}-ac)$ be the discriminant of the quadratic form $ax^{2}+2bxy+cy$ with $D\equiv0,1\pmod4$. Recall that two quadratic forms are equivalent if is possible to transform the first form into the second through an invertible integral linear change of variables. So we have defined an equivalence relation on the set of quadratic forms and the equivalence classes will be called classes of quadratic forms. Gauss showed that the number of equivalence classes of quadratic forms with discriminant $D$ is finite. Let $h_{D}$ denote this number, we also call $h_{D}$ the \textbf{class number}.  We now present Gauss's conjectures quoted from \cite{6}

\begin{conjecture}[Gauss]
Let $h_{D}$ be the class number defined as above. So,
\begin{enumerate} 
	\item Let $D=-4k$ run over all negative discriminants with $k\leq N$. Then
	\begin{equation}\label{eq:conj1}
	\sum_{1\leq k\leq N}h_{D}\sim\frac{4\pi}{21\zeta(3)}N^{\tfrac{3}{2}}.
	\end{equation}
	\item Let $D=4k$ run over all positive discriminants such that $k\leq N$. Then
\begin{equation}\label{eq:conj2}
\sum_{1\leq k\leq N}h_{D}R_{D}\sim\frac{4\pi^{2}}{21\zeta(3)}N^{\tfrac{3}{2}}.
\end{equation}
Where the number $R_{D}$ is associated to the regulator of the real quadratic number field $\mathbb{Q}(\sqrt{D})$.
\end{enumerate}
\end{conjecture}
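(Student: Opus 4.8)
The plan is to convert the class number into a special value of a Dirichlet $L$--function and then to average. For the negative discriminants $D=-4k$ one has, by Dirichlet's class number formula, $h_D=\frac{\sqrt{|D|}}{\pi}L(1,\chi_D)$ (valid for $|D|>4$), where $\chi_D(\cdot)=\left(\frac{D}{\cdot}\right)$ is the associated quadratic character; for positive discriminants the corresponding formula $h_D R_D=\frac{\sqrt{D}}{2}L(1,\chi_D)$ brings in the regulator. Writing $|D|=4k$ so that $\sqrt{|D|}=2\sqrt{k}$, the problem reduces in both cases to understanding the mean value
\[
\sum_{k\le N}\sqrt{k}\,L(1,\chi_{\pm 4k}),
\]
the two Gauss constants arising through the elementary prefactors $\tfrac{2}{\pi}$ and $1$ coming from the two class number formulas, together with the normalization of the regulator in the real case.

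Next I would insert the Dirichlet series $L(1,\chi)=\sum_{n\ge1}\chi(n)/n$, truncated at a suitable length, and interchange the order of summation to obtain $\sum_n n^{-1}\sum_{k\le N}\sqrt{k}\,\chi_{\pm4k}(n)$. The key dichotomy is that the inner character sum exhibits square-root cancellation in $k$ unless $n$ is a perfect square, in which case $\chi_{\pm4k}(m^2)=1$ precisely when $m$ is odd and $(k,m)=1$. Extracting the diagonal $n=m^2$ therefore yields the main term
\[
\sum_{m\ \mathrm{odd}}\frac{1}{m^2}\sum_{\substack{k\le N\\ (k,m)=1}}\sqrt{k}\ \sim\ \frac{2}{3}N^{3/2}\sum_{m\ \mathrm{odd}}\frac{\varphi(m)}{m^3}.
\]

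The Euler product is then evaluated in closed form: $\sum_{m\ge1}\varphi(m)m^{-3}=\zeta(2)/\zeta(3)$, and removing the factor at $p=2$ (which equals $\tfrac{7}{6}$) multiplies this by $\tfrac{6}{7}$, so that the diagonal contributes $\frac{4}{7}\cdot\frac{\zeta(2)}{\zeta(3)}N^{3/2}$. Substituting $\zeta(2)=\pi^2/6$ and restoring the prefactor $\tfrac{2}{\pi}$ reproduces exactly $\frac{4\pi}{21\zeta(3)}N^{3/2}$ in the imaginary case, and the analogous computation yields the companion constant in the real case; here I would track the normalization of $R_D$ carefully, since the convention for the regulator (the fundamental unit versus its square, equivalently the fundamental solution of Pell's equation) is what supplies the extra factor of $2$ distinguishing $\frac{4\pi^2}{21\zeta(3)}$ from the naive $\frac{2\pi^2}{21\zeta(3)}$.

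The main obstacle is the error analysis. I expect the diagonal extraction to be routine, but controlling the off--diagonal terms, where $n$ is not a square, requires genuine cancellation in the character sums $\sum_{k\le N}\chi_{\pm4k}(n)$ uniformly in $n$, for which I would invoke the P\'olya--Vinogradov inequality together with a large sieve estimate in order to beat the trivial bound after summation against $n^{-1}$. Two further technical points must be handled with care: the tail of the truncated $L$--series (bounded on average, again via a large sieve input), and the presence of non--fundamental discriminants in Gauss's unrestricted sum over $D=\pm4k$, whose class numbers must be related to those of the maximal orders through the conductor. Assembling these estimates so that the total error is genuinely $o(N^{3/2})$ is where the real work lies.
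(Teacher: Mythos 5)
The paper does not actually prove this statement: it is recorded as Gauss's conjecture purely for motivation, and the proofs are attributed to Lipschitz \cite{8} (the imaginary case) and Siegel \cite{11} (the real case). So there is no in-paper argument to measure you against. On its own terms, your outline is the standard modern route, and the main-term computation is verifiably correct: with $h_{-4k}=\tfrac{2\sqrt{k}}{\pi}L(1,\chi_{-4k})$, the diagonal contribution $\tfrac{2}{\pi}\cdot\tfrac{2}{3}\cdot\tfrac{6}{7}\cdot\tfrac{\zeta(2)}{\zeta(3)}N^{3/2}$ does collapse to $\tfrac{4\pi}{21\zeta(3)}N^{3/2}$. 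It is also, in spirit, exactly the method the paper uses for its own Theorem \ref{thm:mainthm1} transplanted back to $\mathbb{Q}$: truncate the $L$--series, split into square and non-square terms, evaluate the diagonal as an Euler product, and bound the off-diagonal by character-sum cancellation.

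That said, as written this is a plan rather than a proof, and two of the deferred points are genuine gaps rather than routine checks. First, the non-fundamental discriminants are not something to ``relate through the conductor'' at the end: the reason the diagonal comes out as the clean sum $\sum_{m\ \mathrm{odd}}\varphi(m)/m^{3}$ over \emph{all} $k\le N$ is that the class number formula $h_{d}=\tfrac{\sqrt{|d|}}{\pi}L(1,\chi_{d})$ holds for every order of discriminant $d=-4k$ when $\chi_{d}$ is the (possibly imprimitive) Kronecker symbol modulo $|d|$; if you instead pass to fundamental discriminants you must re-sum over conductors and the bookkeeping changes. This needs to be stated and used from the outset. Second, the factor of $2$ in the real case is not resolved; identifying $R_{D}$ with the correct Pell-equation normalization is precisely where the constant $\tfrac{4\pi^{2}}{21\zeta(3)}$ rather than $\tfrac{2\pi^{2}}{21\zeta(3)}$ comes from, and ``I would track it carefully'' leaves the stated asymptotic unproved. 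The off-diagonal and tail estimates, by contrast, really are standard (P\'olya--Vinogradov plus partial summation in $k$, with a truncation length around $N$, already gives $o(N^{3/2})$ without any large sieve), but they too must actually be executed before the argument is complete.
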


The first conjecture \eqref{eq:conj1} was proved by Lipschitz \cite{8} and the second \eqref{eq:conj2} by Siegel \cite{11}. We now will move to the function field analogue of these results and for this let us now define some notation that will be used in the rest of this paper. For more details see \cite{6,9}.

\subsection{Facts about $\mathbb{F}_{q}[T]$}

The norm of the polynomial $f\in\mathbb{F}_{q}[T]$ is defined to be $|f|=q^{\mathrm{deg}(f)}$, and we define the zeta function associated to $A=\mathbb{F}_{q}[T]$ by
\begin{equation}
\zeta_{A}(s)=\sum_{\substack{f\in A \\ f \ \mathrm{monic}}}\frac{1}{|f|^{s}}, \ \ \ \ \ (\mathfrak{R}(s)>1)
\end{equation}
and is easy to show that
\begin{equation}
\zeta_{A}(s)=\frac{1}{1-q^{1-s}}.
\end{equation}
We also define the analogue of the M\"{o}bius function for $A=\mathbb{F}_{q}[T]$ as follows:
\begin{equation}\label{eq:3.3}
\mu(f)=\left\{
\begin{array}{rcl}
(-1)^{t}, & f=\alpha P_{1}P_{2}\ldots P_{t},\\
0, & \mathrm{otherwise},\\
\end{array}
\right.
\end{equation}
where each $P_{j}$ is a distinct monic irreducible polynomial. And the Euler totient function for $A$, denoted by $\Phi(f)$, is defined to be the number of non--zero polynomials of degree less than $\mathrm{deg}(f)$ and relatively prime to $f$.

We will assume from now that $q\equiv1\pmod4$ and let $\mathbb{F}_{q}$ be a finite field with $q$ elements. We denote by $A=\mathbb{F}_{q}[T]$ the polynomial ring over $\mathbb{F}_{q}$ and by $k=\mathbb{F}_{q}(T)$ the rational function field over $\mathbb{F}_{q}$. Let $K/k$ be a quadratic extension and call $\mathcal{O}_{K}$ the integral closure of $A$ in $K$. 

Consider that $D\in A$ is a square--free polynomial and put $\mathcal{O}_{K}$ as $\mathcal{O}_{D}=A[\sqrt{D}]$. So in this case, $\mathcal{O}_{D}$ is a Dedekind domain and the associated class number $h_{D}$ is equal to $|\mathrm{Pic}(\mathcal{O}_{D})|$, where $\mathrm{Pic}(\mathcal{O}_{D})$ is the Picard group of $\mathcal{O}_{D}$. For a detailed explanation about the Picard group in this context see \cite[pg. 315]{9}. 

Let $D\in A$ be a monic and square--free polynomial. We can define the quadratic character $\chi_{D}$ using the quadratic residue symbol for $\mathbb{F}_{q}[T]$ by
\begin{equation}
\chi_{D}(f)=\left(\frac{D}{f}\right),
\end{equation}
and the associated $L$--function by
\begin{equation}\label{eq:l-function}
L(s,\chi_{D})=\sum_{\substack{f\in A \\ f \ \mathrm{monic}}}\frac{\chi_{D}(f)}{|f|^{s}}.
\end{equation}
Now, using \cite[Proposition 4.3]{9} we have $L(s,\chi_{D})$ is a polynomial in $u=q^{-s}$ of degree at most $\mathrm{deg}(D)-1$. 

Hoffstein and Rosen \cite{5} succeeded in calculating the average value of the class number $h_{D}$ when the average is taken over all monic polynomials $D$ of a fixed degree, they showed that:

\begin{theorem}[Hoffstein and Rosen]
Let $M$ be odd and positive. Then,
\begin{equation}
\frac{1}{q^{M}}\sum_{\substack{D \ \mathrm{monic} \\ \mathrm{deg}(D)=M}}h_{D}=\frac{\zeta_{A}(2)}{\zeta_{A}(3)}q^{\tfrac{M-1}{2}}-q^{-1}.
\end{equation}
\end{theorem}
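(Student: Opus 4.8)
The plan is to derive a finite character--sum expression for $h_D$ from the analytic class number formula, interchange the order of summation, and then evaluate the resulting quadratic character sums by orthogonality together with a square--free sieve. First I would record the class number formula in the relevant ``imaginary'' case: since $M=\deg D$ is odd the place at infinity ramifies, the associated hyperelliptic curve has genus $g=(M-1)/2$, and $h_D=|\mathrm{Pic}(\mathcal{O}_D)|$ equals the value at $u=1$ of the numerator of its zeta function. By the cited Proposition~4.3, $L(s,\chi_D)$ is a polynomial of degree $M-1$ in $u=q^{-s}$; writing it as $\sum_{n=0}^{M-1}\big(\sum_{\deg f=n}\chi_D(f)\big)u^{n}$ and specialising gives
\[
h_D=\sum_{\substack{f\ \mathrm{monic}\\ \deg f\le M-1}}\chi_D(f),
\]
where $D$ ranges over monic square--free polynomials, so that $h_D$ is a genuine class number. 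If desired, the functional equation $a_{2g-n}=q^{\,g-n}a_n$ for the coefficients $a_n=\sum_{\deg f=n}\chi_D(f)$ folds this into a weighted sum over $\deg f\le g$, which is the exact form of the approximate functional equation alluded to in the abstract.

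Substituting and interchanging summation,
\[
\sum_{\substack{D\ \text{monic sq-free}\\ \deg D=M}}h_D=\sum_{\substack{f\ \mathrm{monic}\\ \deg f\le M-1}}\ \sum_{\substack{D\ \text{monic sq-free}\\ \deg D=M}}\left(\frac{D}{f}\right).
\]
I would evaluate the inner sum by removing the square--free condition through the identity $\mathbf{1}_{\text{sq-free}}(D)=\sum_{e^{2}\mid D}\mu(e)$ and writing $D=e^{2}n$, which reduces everything to the complete sums $U(f,N)=\sum_{n\ \mathrm{monic},\,\deg n=N}\big(\tfrac{n}{f}\big)$ over \emph{all} monic $n$. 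The key structural input is orthogonality: the map $n\mapsto\big(\tfrac{n}{f}\big)$ is a character modulo $f$, principal exactly when $f$ is a perfect square, so $U(f,N)=0$ whenever $f$ is not a perfect square and $N\ge\deg f$, while $U(\ell^{2},N)=\#\{n\ \mathrm{monic}:\deg n=N,\ (n,\ell)=1\}$. This leaves two families of surviving terms: the \emph{diagonal} perfect--square moduli $f=\ell^{2}$ with $\deg\ell\le g$, and the \emph{boundary} terms in which the truncation forces $N<\deg f$ so that the complete sum need not vanish.

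I would then assemble these contributions using the zeta function of $A$. For the diagonal, the count of monic square--free $D$ coprime to $\ell$ is read off from $\prod_{P\nmid\ell}(1+u^{\deg P})=\frac{1-qu^{2}}{1-qu}\prod_{P\mid\ell}(1+u^{\deg P})^{-1}$, and the sum over $\ell$ is collapsed through the Euler--product identities $\prod_{P}(1-q^{-2\deg P})=\zeta_A(2)^{-1}$ and the analogue for $\zeta_A(3)$, producing the factor $\zeta_A(2)/\zeta_A(3)$ together with the power $q^{(M-1)/2}$; the boundary family supplies the remaining lower--order contribution. Throughout, quadratic reciprocity is available in clean form because $q\equiv1\pmod4$, and it can be used to recast the sums over $f$ as sums of shorter $L$--series, organising the whole computation as a double Dirichlet series whose poles encode the main terms.

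The main obstacle is the \emph{exact} bookkeeping rather than any single estimate. In particular, the diagonal perfect--square moduli do \emph{not} by themselves reproduce the main term: the boundary terms produced by the square--free sieve contribute at the same order $q^{\,M+g-1}$, so an honest evaluation must combine both families and exhibit the precise cancellation that turns their sum into $\frac{\zeta_A(2)}{\zeta_A(3)}q^{(M-1)/2}$ with the exact secondary term $-q^{-1}$. Controlling the interaction between the two degree windows $\deg f\le M-1$ and $\deg e\le g$, both tied to $M$, and checking that the generating functions collapse to exact constants rather than mere asymptotics, is where essentially all of the work lies.
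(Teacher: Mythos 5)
First, a caveat: the paper does not prove this statement. It is quoted as a known theorem of Hoffstein and Rosen and used only as motivation (the inline citation even appears to be a typo --- it should point to their paper, reference [6], not to Gauss). So there is no internal proof to compare against, and I can only assess your argument on its own terms.

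There is a genuine gap, and it is structural rather than a matter of bookkeeping. The theorem averages $h_{D}$ over \emph{all} monic $D$ of degree $M$, but from your second display onward you restrict to monic \emph{square-free} $D$ and insert the M\"obius sieve $\sum_{e^{2}\mid D}\mu(e)$. That is a different problem --- exactly the harder one this paper is devoted to --- and it has a different answer. Indeed, combining the paper's final theorem with $\#\mathcal{H}_{2g+1,q}=(q-1)q^{2g}$ and $\zeta_{A}(2)=(1-q^{-1})^{-1}$, the square-free sum normalized by $q^{M}$ is asymptotic to $P(2)\,q^{(M-1)/2}$ with $P(2)=\prod_{P}\bigl(1-\tfrac{1}{(|P|+1)|P|^{2}}\bigr)<1$, whereas the claimed constant is $\zeta_{A}(2)/\zeta_{A}(3)=1+q^{-1}>1$. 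So even a flawless execution of your plan lands on the wrong constant. What is missing is the contribution of the non-square-free discriminants $D=D_{0}m^{2}$, for which $h_{D}$ is the order of the Picard group of the non-maximal order $A[\sqrt{D}]$ and is related to $h_{D_{0}}$ by a conductor factor; it is the extra summation over conductors $m$ that produces the clean ratio $\zeta_{A}(2)/\zeta_{A}(3)$. (Your identity $h_{D}=\sum_{\deg f\le M-1}\chi_{D}(f)$ is fine for square-free $D$, but it does not apply verbatim to imprimitive $D$, so the very first reduction already needs to be reworked.)

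A second, related problem is exactness. The stated formula is an identity, not an asymptotic. When one sums $\chi_{D}(f)=\bigl(\tfrac{D}{f}\bigr)$ over \emph{all} monic $D$ of fixed degree $M\ge\deg f$, orthogonality gives exactly $0$ for non-square $f$ and exactly $q^{M}\Phi(\ell)/|\ell|$ for $f=\ell^{2}$, with no error terms; this is what makes an exact evaluation possible. Your sieve destroys this: it forces you into Weil-type bounds for the incomplete character sums (the ``boundary'' terms you mention), which yield $O(\cdot)$ errors that can never be reconciled with the exact secondary term $-q^{-1}$. Incidentally, those boundary terms are of size $O((2q)^{g})$, far below the main term $q^{M+g}$ of the square-free sum, so your stated worry that they contribute at the same order is also not correct.
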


The theorem above can be seen as the function field analogue of the Gauss's conjectures. A problem which is more difficult and we consider in this paper is to average the class number over fundamental discriminants, i.e., $D$ monic and square--free. We should note that the calculations presented in this paper follows the same philosophy of the calculations firstly presented by Faifman, Kurlberg and Rudnick in \cite{4,7,10}, where we fix the number of elements of the finite field and compute the limit as $\mathrm{deg}(D)\rightarrow\infty$ to obtain our asymptotic formulas. 

\section{Statement of  Results}

The main objective of this paper is to  establish an asymptotic formula for

\begin{equation}
\frac{1}{\#\mathcal{H}_{2g+1,q}}\sum_{D\in\mathcal{H}_{2g+1,q}}h_{D},
\end{equation}
as $\mathrm{deg}(D)\rightarrow\infty$. Where $h_{D}$ is the associated class number and $\mathcal{H}_{2g+1,q}$ denotes the following set,
\begin{equation}\label{eq:ensemble}
\mathcal{H}_{2g+1,q}=\{D \ \mathrm{monic}, \ \mathrm{deg}(D)=2g+1, \ D \ \mathrm{square \ free}, \ D\in A \}.
\end{equation}
The number of elements in $\mathcal{H}_{2g+1,q}$ is given by,
\begin{equation}\label{eq:2.3}
\#\mathcal{H}_{2g+1,q}=(q-1)q^{2g}
\end{equation}
as can be seen using \cite[Proposition 2.3]{9}.

\subsection{The Geometric Viewpoint}

As we said earlier, the class numbers $h_{D}$ is equal to the $|\mathrm{Pic}(\mathcal{O}_{D})|$, where $\mathrm{Pic}(\mathcal{O}_{D})$ is the Picard group of $\mathcal{O}_{D}$. But we should also note that if $D\in\mathcal{H}_{2g+1,q}$, then the equation $y^{2}=D(T)$ defines a hyperelliptic curve $C_{D}$ over $\mathbb{F}_{q}$ of genus $g$ and the number $h_{D}$ is closely related to the set of the $\mathbb{F}_{q}$--rational points on its Jacobian, $\mathrm{Jac}(C_{D})$, and so our result also has a geometric appeal. We will now develop this geometric side and then state our results.

The zeta function of the curve $C_{D}$ over $\mathbb{F}_{q}$ is a rational function as shown by Weil \cite{12},
\begin{equation}
Z_{C_{D}}(u)=\frac{P_{C_{D}}(u)}{(1-u)(1-qu)},
\end{equation}
where $P_{C_{D}}(u)$ is a polynomial of degree $2g$ with coefficients in $\mathbb{Z}$. Making use of \cite[Proposition 14.6 and Proposition 17.7]{9}, we can show that the zeta function of the curve $C_{D}$ is given by
\begin{equation}
Z_{C_{D}}(u)=\frac{\mathcal{L}(u,\chi_{D})}{(1-u)(1-qu)},
\end{equation}
where $\mathcal{L}(u,\chi_{D})=L(s,\chi_{D})$ is the $L$-function associated with the quadratic character as given in \eqref{eq:l-function}. We have $\mathcal{L}(u,\chi_{D})$ satisfies the following functional equation
\begin{equation}
\label{eq:funceq}
\mathcal{L}(u,\chi_{D})=(qu^{2})^{g}\mathcal{L}\left(\frac{1}{qu},\chi_{D}\right),
\end{equation} 
as can be seen from \cite[Theorem 5.9]{9}. This allows us to state and prove a lemma which will be the starting point of the main calculations of this paper.
\begin{lemma}
If $D\in\mathcal{H}_{2g+1,q}$ we have that $L(s,\chi_{D})$ can be expressed as follows
\begin{equation}\label{eq:approx}
L(s,\chi_{D})=\sum_{\substack{f_{1} \ \mathrm{monic} \\ \mathrm{deg}(f_{1})\leq g}}\frac{\chi_{D}(f_{1})}{|f_{1}|^{s}}+(q^{1-2s})^{g}\sum_{\substack{f_{2} \ \mathrm{monic} \\ \mathrm{deg}(f_{2})\leq g-1}}\frac{\chi_{D}(f_{2})}{|f_{2}|^{1-s}}.
\end{equation}
\end{lemma}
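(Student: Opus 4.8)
The plan is to use the fact, recorded just before the statement, that $\mathcal{L}(u,\chi_D)$ is a polynomial in $u=q^{-s}$ of degree at most $\deg(D)-1=2g$, and to extract from the functional equation \eqref{eq:funceq} the symmetry it imposes on the coefficients. First I would rewrite the defining series \eqref{eq:l-function} using $|f|^{-s}=u^{\deg(f)}$ and collect terms by degree,
\begin{equation}
\mathcal{L}(u,\chi_D)=\sum_{n=0}^{2g}c_n u^n,\qquad c_n=\sum_{\substack{f \ \mathrm{monic}\\ \deg(f)=n}}\chi_D(f),
\end{equation}
where the truncation at $n=2g$ is exactly the degree bound from \cite[Proposition 4.3]{9}.

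Next I would substitute this expansion into both sides of $\mathcal{L}(u,\chi_D)=(qu^2)^g\mathcal{L}\bigl(1/(qu),\chi_D\bigr)$. Expanding the right-hand side gives $q^g u^{2g}\sum_{n=0}^{2g}c_n q^{-n}u^{-n}=\sum_{n=0}^{2g}c_n q^{g-n}u^{2g-n}$, and after the change of index $m=2g-n$ this reads $\sum_{m=0}^{2g}c_{2g-m}\,q^{m-g}u^{m}$. Matching the coefficient of $u^m$ on the two sides produces the symmetry
\begin{equation}\label{eq:coeffsym}
c_m=q^{m-g}c_{2g-m},\qquad 0\le m\le 2g.
\end{equation}

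With \eqref{eq:coeffsym} available I would split the polynomial at the midpoint, writing $\mathcal{L}(u,\chi_D)=\sum_{m=0}^{g}c_m u^m+\sum_{m=g+1}^{2g}c_m u^m$. The first sum is, by the definition of $c_m$, precisely $\sum_{\deg(f_1)\le g}\chi_D(f_1)|f_1|^{-s}$, which is the first term of \eqref{eq:approx}. In the second sum I would substitute $c_m=q^{m-g}c_{2g-m}$ and re-index by $j=2g-m$, so that $j$ runs from $0$ to $g-1$, obtaining $\sum_{j=0}^{g-1}q^{g-j}c_j\,u^{2g-j}$. The concluding bookkeeping is to recognise that, for $\deg(f_2)=j$, one has $q^{g-j}u^{2g-j}=(q^{1-2s})^g|f_2|^{-(1-s)}$, since $u=q^{-s}$ gives $(q^{1-2s})^g=q^g u^{2g}$ and $|f_2|^{-(1-s)}=q^{-j}u^{-j}$; this turns the second sum into $(q^{1-2s})^g\sum_{\deg(f_2)\le g-1}\chi_D(f_2)|f_2|^{-(1-s)}$, which is the second term of \eqref{eq:approx}.

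The argument is really just a coefficient comparison, so there is no deep obstacle; the one place that demands care is the re-indexing together with the translation of the surviving powers of $q$ and $u$ back into the normalisations $|f_1|^{-s}$ and $(q^{1-2s})^g|f_2|^{-(1-s)}$. In particular the asymmetry of the two summation ranges, where the first runs up to $g$ but the second only to $g-1$, is exactly what an off-by-one slip would destroy, so I would verify the endpoints by checking that $c_0=1$ forces $c_{2g}=q^g$ through \eqref{eq:coeffsym}, confirming that the top coefficient and hence the ranges are correctly placed.
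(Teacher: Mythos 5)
Your proposal is correct and follows essentially the same route as the paper: expand $\mathcal{L}(u,\chi_D)=\sum_{n=0}^{2g}c_nu^n$, substitute into the functional equation \eqref{eq:funceq} to get the coefficient symmetry $c_m=q^{m-g}c_{2g-m}$, split the polynomial at degree $g$, and re-index the upper half. Your explicit translation of $q^{g-j}u^{2g-j}$ back into $(q^{1-2s})^g|f_2|^{-(1-s)}$ and the endpoint check via $c_0=1$ are welcome additions (the paper's closing line even contains a typo, writing $u=q^{-1/2}$ where $u=q^{-s}$ is meant), but the argument is the same.
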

\begin{proof}
Using the ideas presented by Conrey \textit{et.al.} in \cite{3} we substitute $\mathcal{L}(u,\chi_{D})=\sum_{n=0}^{2g}a_{n}u^{n}$ into the functional equation \eqref{eq:funceq}
\begin{eqnarray}
\sum_{n=0}^{2g}a_{n}u^{n} & = & q^{g}u^{2g}\sum_{m=0}^{2g}a_{m}\left(\frac{1}{qu}\right)^{m}\nonumber \\
& = & \sum_{m=0}^{2g}a_{m}q^{g-m}u^{2g-m}=\sum_{k=0}^{2g}a_{2g-k}q^{k-g}u^{k}.\nonumber
\end{eqnarray}
Therefore,
$$\sum_{n=0}^{2g}a_{n}u^{n}=\sum_{k=0}^{2g}a_{2g-k}q^{k-g}u^{k}.$$
Equating coefficients we have that
$$a_{n}=a_{2g-n}q^{n-g} \ \ \ \ \ \mathrm{or} \ \ \ \ \ a_{2g-n}=a_{n}q^{g-n}$$
and so we can write the polynomial $\mathcal{L}(u,\chi_{D})$ as
\begin{eqnarray}
\sum_{n=0}^{2g}a_{n}u^{n} & = & \sum_{n=0}^{g}a_{n}u^{n}+\sum_{m=0}^{g-1}a_{2g-m}u^{2g-m}\nonumber \\
& = & \sum_{n=0}^{g}a_{n}u^{n}+q^{g}u^{2g}\sum_{m=0}^{g-1}a_{m}q^{-m}u^{-m}\label{eq:3.22}.
\end{eqnarray}
Writing $\displaystyle{a_{n}=\sum_{\substack{f \ \mathrm{monic} \\ \mathrm{deg}(f)=n}}\chi_{D}(f)}$ and $u=q^{-1/2}$ in \eqref{eq:3.22} proves the lemma.
\end{proof}

With this in mind we are ready to state our main results.

\begin{theorem}\label{thm:mainthm1}
Let $\mathbb{F}_{q}$ be a fixed finite field with $q\equiv1\pmod4$. Then
\begin{eqnarray}
\sum_{D\in\mathcal{H}_{2g+1,q}}L(1,\chi_{D})& =& |D|\left\{P(2)-P(1)\left(\frac{1}{q^{[g/2]+1}}+\frac{1}{\zeta_{A}(2)^{2}q^{g[(g-1)/2]}}\right)\right\}\nonumber\\
& + & O((2q)^{g}),
\end{eqnarray}
where $|D|=q^{2g+1}$ and
\begin{equation}
P(s)=\prod_{\substack{P \ \mathrm{monic} \\ \mathrm{irreducible}}}\left(1-\frac{1}{(|P|+1)|P|^{s}}\right).
\end{equation}
\end{theorem}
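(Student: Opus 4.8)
The plan is to feed the approximate functional equation of the Lemma, taken at $s=1$, into the sum and reduce everything to the character sums
\[
\mathcal{S}(f)=\sum_{D\in\mathcal{H}_{2g+1,q}}\chi_{D}(f).
\]
Setting $s=1$ in \eqref{eq:approx} gives $(q^{1-2s})^{g}=q^{-g}$ and $|f_{2}|^{1-s}=1$, so after interchanging the order of summation one obtains
\[
\sum_{D\in\mathcal{H}_{2g+1,q}}L(1,\chi_{D})=\sum_{\substack{f_{1}\ \mathrm{monic}\\ \deg(f_{1})\leq g}}\frac{\mathcal{S}(f_{1})}{|f_{1}|}+q^{-g}\sum_{\substack{f_{2}\ \mathrm{monic}\\ \deg(f_{2})\leq g-1}}\mathcal{S}(f_{2}).
\]
The guiding dichotomy is that $\chi_{D}(f)=\left(\frac{D}{f}\right)$, viewed as a function of $D$, is a Dirichlet character whose modulus is the product of the primes dividing $f$ to an odd power; it is principal exactly when $f$ is a perfect square. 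Hence $\mathcal{S}(f)$ is of size $\#\mathcal{H}_{2g+1,q}=(q-1)q^{2g}$ when $f=\square$ and should exhibit cancellation otherwise, which is the source of the main term and of the error respectively.

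For the main term I would isolate the square arguments $f=\ell^{2}$, where $\left(\frac{D}{\ell^{2}}\right)$ is the indicator of $\gcd(D,\ell)=1$, so that $\mathcal{S}(\ell^{2})$ counts square-free $D\in\mathcal{H}_{2g+1,q}$ coprime to $\ell$. Detecting square-freeness by $\mu^{2}(D)=\sum_{A^{2}\mid D}\mu(A)$ and writing $D=A^{2}B$ turns $\mathcal{S}(\ell^{2})$ into a M\"obius-weighted count of monic $B$ coprime to $\ell$, which is explicit; completing the $A$-sum gives
\[
\mathcal{S}(\ell^{2})\approx\frac{q^{2g+1}}{\zeta_{A}(2)}\prod_{P\mid\ell}\frac{|P|}{|P|+1}.
\]
Feeding this into the first sum and completing the $\ell$-sum, the Euler product collapses: using $\zeta_{A}(2)^{-1}=\prod_{P}(1-|P|^{-2})$ one checks that the local factor of $\sum_{\ell}|\ell|^{-2}\prod_{P\mid\ell}|P|/(|P|+1)$ equals $(|P|^{3}+|P|^{2}-1)/((|P|+1)^{2}(|P|-1))$, which is precisely the local factor of $\zeta_{A}(2)P(2)$. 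This yields the leading term $|D|P(2)$. The $P(1)$-corrections then arise from the truncations I completed: the tail $\deg\ell>[g/2]$ of the first sum, after the residue identity $\prod_{P}(1-q^{-\deg P}/(|P|+1))=P(1)$, contributes $-|D|P(1)q^{-([g/2]+1)}$, and the dual sum $q^{-g}\sum\mathcal{S}(\ell^{2})$ together with the boundary of the coprimality count supplies the remaining correction. All of this is bookkeeping once the two Euler-product identities are established.

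For the error I would treat the non-square $f$. The generating series $\sum_{D}\chi_{D}(f)u^{\deg D}$ over square-free $D$ factors as $L(u,\psi_{f})/L(u^{2},\psi_{f}^{2})$, where $\psi_{f}=\left(\frac{\cdot}{f}\right)$ is non-principal and $L(u,\psi_{f})$ is a polynomial whose inverse roots have absolute value $q^{1/2}$ by the Riemann Hypothesis for curves (Weil), the same arithmetic behind \eqref{eq:funceq}. Extracting the coefficient of $u^{2g+1}$ then produces square-root cancellation, so that $\mathcal{S}(f)$ is $q^{g}$ times a combinatorial factor of size at most $(1+q^{1/2})^{\deg f}$ coming from the elementary-symmetric functions of the inverse roots of $L(u,\psi_{f})$. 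Summing these bounds against $1/|f|$ over $\deg f\leq g$, and against $q^{-g}$ over $\deg f\leq g-1$ for the dual sum, together with the elementary estimates for the truncation and boundary terms left over from the main-term step, keeps the total within $O((2q)^{g})$.

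The hard part will be exactly this uniform control of the non-square sums: one has to combine the Weil bound on the coefficients of $L(u,\psi_{f})$ with the combinatorial weights and sum over all $\sim q^{g}$ non-square $f$ of degree up to $g$, arranging that the $q^{1/2}$-cancellation defeats both the number of characters and the combinatorial factors so as to collapse to $O((2q)^{g})$. A secondary subtlety, special to $q\equiv1\pmod4$, is that the two truncation corrections are genuinely larger than $(2q)^{g}$, so they must be extracted as explicit $P(1)$-terms rather than discarded into the error; this forces the careful residue and boundary analysis indicated above.
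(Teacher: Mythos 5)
Your overall route is the same as the paper's: approximate functional equation at $s=1$, interchange of summation, the square/non-square dichotomy, M\"obius detection of square-freeness to evaluate $\mathcal{S}(\ell^2)$ (this is the paper's Proposition \ref{prop:2}), and the Euler-product identities producing $|D|P(2)$ with $P(1)$-corrections from the truncations. Your local-factor computation $\bigl(|P|^{3}+|P|^{2}-1\bigr)/\bigl((|P|+1)^{2}(|P|-1)\bigr)$ and the identification of the tail $\deg\ell>[g/2]$ with the $-|D|P(1)q^{-[g/2]-1}$ correction both check out and reproduce the paper's main term, merely with the geometric-sum bookkeeping done in a different order.

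The genuine problem is in the non-square estimate, which you yourself flag as the hard part but then resolve with a bound that does not close. You bound $\mathcal{S}(f)$ by $q^{g}$ times $(1+q^{1/2})^{\deg f}$, the latter being $\sum_{n}|e_{n}(\alpha)|\le\prod_{i}(1+|\alpha_{i}|)$ over the inverse roots of $L(u,\psi_{f})$. Summing $q^{-n}$ times $q^{n}$ characters of degree $n$ times $q^{g}(1+\sqrt{q})^{n}$ over $n\le g$ gives $O\bigl(((1+\sqrt{q})\,q)^{g}\bigr)$, and since $1+\sqrt{q}>2$ for every admissible $q$, this is strictly larger than the claimed $O((2q)^{g})$ (it still suffices for the Corollary, but not for the Theorem as stated). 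The point you are missing is that after the M\"obius expansion $\mu^{2}(D)=\sum_{A^{2}\mid D}\mu(A)$ you need only \emph{one} coefficient of $L(u,\psi_{f})$ for each $j=\deg A$, namely the coefficient of $u^{2g+1-2j}$, bounded by $\binom{\deg f-1}{2g+1-2j}q^{g+\frac12-j}$; the Weil factor $q^{g+\frac12-j}$ is exactly cancelled by the $q^{j}$ choices of $A$, leaving only $\sum_{j}\binom{\deg f-1}{2g+1-2j}\le 2^{\deg f-1}$. It is this binomial weight $2^{\deg f}$ --- not $(1+\sqrt{q})^{\deg f}$ --- that collapses the non-square contribution to $O((2q)^{g})$; this is precisely the content of the paper's Lemma \ref{lem:nonsquare} via the Faifman--Rudnick bound (Lemma \ref{lem:RHfunctionfields}). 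With that replacement your argument coincides with the paper's proof.
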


As a corollary of the Theorem \ref{thm:mainthm1} we have,
\begin{corollary}\label{cor:cor}
\begin{equation}
\frac{1}{\#\mathcal{H}_{2g+1,q}}\sum_{D\in\mathcal{H}_{2g+1,q}}L(1,\chi_{D})\sim\zeta_{A}(2)P(2)
\end{equation}
as $\mathrm{deg}(D)\rightarrow\infty$, i.e., $g\rightarrow\infty$.
\end{corollary}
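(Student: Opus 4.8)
The plan is to derive the corollary directly from Theorem~\ref{thm:mainthm1} by dividing the asymptotic formula there by the cardinality $\#\mathcal{H}_{2g+1,q}=(q-1)q^{2g}$ recorded in \eqref{eq:2.3}, and then letting $g\to\infty$. The first thing I would do is isolate the ratio of the leading factor to the count: since $|D|=q^{2g+1}$, we have
\begin{equation}
\frac{|D|}{\#\mathcal{H}_{2g+1,q}}=\frac{q^{2g+1}}{(q-1)q^{2g}}=\frac{q}{q-1}=\zeta_{A}(2),
\end{equation}
where the last equality uses $\zeta_{A}(s)=1/(1-q^{1-s})$ evaluated at $s=2$. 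This single identity is what produces the claimed constant $\zeta_{A}(2)P(2)$ from the main term $|D|\,P(2)$ of the theorem.

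With this identification in hand, I would show that every remaining contribution is negligible in the limit. Dividing the bracketed secondary terms of Theorem~\ref{thm:mainthm1} by $\#\mathcal{H}_{2g+1,q}$ again introduces the factor $\zeta_{A}(2)$, leaving
\begin{equation}
\zeta_{A}(2)\,P(1)\left(\frac{1}{q^{[g/2]+1}}+\frac{1}{\zeta_{A}(2)^{2}q^{g[(g-1)/2]}}\right),
\end{equation}
and both exponents $[g/2]+1$ and $g[(g-1)/2]$ tend to infinity with $g$, so each summand decays to zero since $q>1$.

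The only point requiring a word of care is the error term $O((2q)^{g})$: after division by $\#\mathcal{H}_{2g+1,q}$ it becomes $O\!\left((2/q)^{g}/(q-1)\right)$, which vanishes precisely when $q>2$. This is where the standing hypothesis $q\equiv1\pmod4$ enters, forcing $q\geq5$ and guaranteeing $2/q<1$. Collecting the main term together with these vanishing corrections yields $\zeta_{A}(2)P(2)$ as $g\to\infty$, which is the assertion. I do not expect a genuine obstacle here: the entire content of the corollary is the arithmetic identity $|D|/\#\mathcal{H}_{2g+1,q}=\zeta_{A}(2)$ combined with the observation that the lower-order terms and the error term all decay geometrically (or faster) in $g$.
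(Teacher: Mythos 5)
Your proposal is correct and follows exactly the route the paper intends: divide the formula of Theorem~\ref{thm:mainthm1} by $\#\mathcal{H}_{2g+1,q}=(q-1)q^{2g}$, use $|D|/\#\mathcal{H}_{2g+1,q}=q/(q-1)=\zeta_{A}(2)$, and observe that the secondary terms and the $O((2q)^{g})$ error (which becomes $O((2/q)^{g})$, negligible since $q\geq5$) vanish as $g\to\infty$. The paper's own proof is a one-line appeal to the same computation, so you have simply supplied the details it leaves implicit.
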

\begin{proof}
Using the Theorem \ref{thm:mainthm1} together with \eqref{eq:2.3} and computing the limit as $g\rightarrow\infty$ we can conclude the asymptotic formula above.
\end{proof}

To obtain a formula for the average of the class number $h_{D}$ we will make use of the following theorem due Artin \cite{2}.

\begin{theorem}[Artin]\label{thm:artin}
Let $D\in A$ be a square--free polynomial of degree $M$. Then if $M$ is odd we have,
\begin{equation}
L(1,\chi_{D})=\frac{\sqrt{q}}{\sqrt{|D|}}h_{D}.
\end{equation} 
\end{theorem}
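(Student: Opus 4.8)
The plan is to identify $L(1,\chi_D)$ with the order of the Jacobian of $C_D$ and then to transfer that quantity to the class number $h_D$ by means of the functional equation. Recall from the geometric discussion above that $\mathcal{L}(u,\chi_D)=P_{C_D}(u)$ is the numerator of the zeta function $Z_{C_D}(u)$. By Weil's theory the order of the group of $\mathbb{F}_q$--rational points on the Jacobian is recovered as the value of this polynomial at $u=1$, so that
\[
|\mathrm{Jac}(C_D)(\mathbb{F}_q)|=|\mathrm{Pic}^0(C_D)|=\mathcal{L}(1,\chi_D).
\]
The first task is therefore to establish the clean identity $\mathcal{L}(1,\chi_D)=h_D$, relating the geometric class number of the projective model to the arithmetic class number of the affine ring $\mathcal{O}_D$.

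The bridge between $|\mathrm{Pic}^0(C_D)|$ and $h_D=|\mathrm{Pic}(\mathcal{O}_D)|$ is exactly where the hypothesis that $M=\deg(D)$ is odd enters decisively. When $M$ is odd the place at infinity of $k$ is ramified in $K=k(\sqrt{D})$, so there is a single place $\infty$ of $C_D$ above it, and it has degree one. The ideal class group $\mathrm{Pic}(\mathcal{O}_D)$ of the affine coordinate ring is the full divisor class group of $C_D$ modulo the subgroup generated by the classes of the infinite places, i.e. $\mathrm{Pic}(\mathcal{O}_D)=\mathrm{Pic}(C_D)/\langle[\infty]\rangle$. Because $[\infty]$ has degree one, it splits the degree exact sequence $0\to\mathrm{Pic}^0(C_D)\to\mathrm{Pic}(C_D)\xrightarrow{\deg}\mathbb{Z}\to0$, and quotienting by $\langle[\infty]\rangle$ therefore identifies $\mathrm{Pic}(\mathcal{O}_D)$ with $\mathrm{Pic}^0(C_D)$. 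Hence $h_D=\mathcal{L}(1,\chi_D)$. This is the ``imaginary quadratic'' situation in which the regulator is trivial; for even $M$ the infinite place splits, a unit contribution intervenes, and the identity fails, which is precisely why the statement is restricted to odd $M$.

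It remains to pass from the value at $u=1$ to $L(1,\chi_D)$, which under the substitution $u=q^{-s}$ corresponds to $u=q^{-1}$. I would evaluate the functional equation \eqref{eq:funceq} at $u=q^{-1}$. Writing $g=(M-1)/2$, the substitution gives
\[
L(1,\chi_D)=\mathcal{L}(q^{-1},\chi_D)=\left(q\cdot q^{-2}\right)^{g}\mathcal{L}(1,\chi_D)=q^{-g}h_D.
\]
Finally, since $|D|=q^{M}=q^{2g+1}$ we have $\sqrt{|D|}=q^{g}\sqrt{q}$, whence $q^{-g}=\sqrt{q}/\sqrt{|D|}$, and the claimed formula follows.

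The main obstacle is the middle step: correctly relating the ideal class group of the affine order $\mathcal{O}_D$ to the degree--zero divisor class group of the smooth projective model, while keeping careful track of the behaviour at the places above infinity and verifying that the degree map splits. Once the identity $\mathcal{L}(1,\chi_D)=h_D$ is secured, the functional-equation computation and the concluding arithmetic are entirely routine.
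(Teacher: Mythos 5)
Your argument is correct. Note, however, that the paper offers no proof of this statement at all: it is quoted as a classical theorem of Artin with a citation to his 1924 paper, so there is no internal argument to compare against. What you have supplied is a legitimate self-contained derivation, and each of its three steps checks out. The identification $\mathcal{L}(1,\chi_D)=|\mathrm{Pic}^0(C_D)|$ is standard Weil theory ($P_{C_D}(1)=\prod_i(1-\alpha_i)=\deg(1-F)=|\mathrm{Jac}(C_D)(\mathbb{F}_q)|$, and $H^1(\mathbb{F}_q,\mathrm{Jac})=0$ by Lang's theorem identifies this with $\mathrm{Pic}^0$). Your middle step is the genuinely substantive one and you handle it correctly: for $D$ monic square--free of odd degree the infinite place of $k$ ramifies in $k(\sqrt{D})$, the unique place above it has degree one, the degree sequence $0\to\mathrm{Pic}^0\to\mathrm{Pic}\to\mathbb{Z}\to0$ splits via $[\infty]\mapsto 1$, and hence $\mathrm{Pic}(\mathcal{O}_D)\cong\mathrm{Pic}(C_D)/\langle[\infty]\rangle\cong\mathrm{Pic}^0(C_D)$; equivalently, the $S$-unit group has rank zero so no regulator appears (this is Rosen's Proposition 14.1 specialised to $|S|=1$, $\deg\infty=1$). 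Finally, evaluating the functional equation \eqref{eq:funceq} at $u=q^{-1}$ gives $L(1,\chi_D)=q^{-g}\mathcal{L}(1,\chi_D)=q^{-g}h_D$, and $q^{-g}=\sqrt{q}/\sqrt{|D|}$ since $|D|=q^{2g+1}$. The only cosmetic gap is that you assert rather than verify that $\mathcal{L}(u,\chi_D)$ coincides exactly with the Weil numerator $P_{C_D}(u)$ (i.e.\ that the Dirichlet $L$-polynomial has full degree $2g$ and no trivial Euler factor is missing); this is exactly the content of the propositions of Rosen that the paper invokes when setting up the geometric picture, so it is reasonable to take it as given here.
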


We now have all the ingredients to obtain an asymptotic formula for the average of the class number $h_{D}$. 

\begin{theorem}
Under the same hypothesis as in Theorem \ref{thm:mainthm1} we have,
\begin{equation}
\frac{1}{\#\mathcal{H}_{2g+1,q}}\sum_{D\in\mathcal{H}_{2g+1,q}}h_{D}\sim\frac{\sqrt{|D|}}{\sqrt{q}}\zeta_{A}(2)P(2),
\end{equation}
as $\mathrm{deg}(D)\rightarrow\infty$.
\end{theorem}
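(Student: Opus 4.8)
The plan is to derive this from Artin's formula (Theorem \ref{thm:artin}) together with the average already computed in Corollary \ref{cor:cor}, so that essentially no new analysis is required. First I would observe that every $D\in\mathcal{H}_{2g+1,q}$ is monic, square--free, and of degree $M=2g+1$, which is odd. Hence Artin's theorem applies uniformly to every member of the family and gives
\begin{equation}
h_{D}=\frac{\sqrt{|D|}}{\sqrt{q}}L(1,\chi_{D})
\end{equation}
for each such $D$. Moreover $|D|=q^{\mathrm{deg}(D)}=q^{2g+1}$ depends only on $g$ and not on the individual polynomial, so the factor $\sqrt{|D|}/\sqrt{q}=q^{g}$ is constant across the ensemble $\mathcal{H}_{2g+1,q}$ for fixed $g$.

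Next I would substitute this relation into the class--number average and pull the constant factor outside the sum:
\begin{equation}
\frac{1}{\#\mathcal{H}_{2g+1,q}}\sum_{D\in\mathcal{H}_{2g+1,q}}h_{D}=\frac{\sqrt{|D|}}{\sqrt{q}}\cdot\frac{1}{\#\mathcal{H}_{2g+1,q}}\sum_{D\in\mathcal{H}_{2g+1,q}}L(1,\chi_{D}).
\end{equation}
The remaining average of $L(1,\chi_{D})$ over the family is precisely the quantity treated in Corollary \ref{cor:cor}, which tends to $\zeta_{A}(2)P(2)$ as $\mathrm{deg}(D)\rightarrow\infty$. Combining the two displays immediately yields the claimed asymptotic $\frac{\sqrt{|D|}}{\sqrt{q}}\zeta_{A}(2)P(2)$.

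Since the argument is a direct substitution, I do not anticipate a genuine analytic obstacle here: all the real work is already contained in Theorem \ref{thm:mainthm1} and its corollary, whose error term $O((2q)^{g})$ becomes negligible after division by $\#\mathcal{H}_{2g+1,q}=(q-1)q^{2g}$ because $q\geq5$ when $q\equiv1\pmod4$. The only point that genuinely requires care is the parity condition, since Artin's formula in the stated form holds only for square--free $D$ of odd degree; it is exactly the choice of the ensemble $\mathcal{H}_{2g+1,q}$, with its odd degree $2g+1$, that guarantees this hypothesis holds for every term of the sum. One should also note that the asymptotic equivalence is preserved because the factor $\sqrt{|D|}/\sqrt{q}$ does not vary over the sum for a given $g$ and so merely rescales both sides by the same amount.
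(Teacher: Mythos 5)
Your argument is exactly the paper's proof, which simply says the result is ``straightforward by making use of Corollary \ref{cor:cor} and Theorem \ref{thm:artin}'': you invert Artin's formula, note that $\sqrt{|D|}/\sqrt{q}=q^{g}$ is constant over the ensemble for fixed $g$, and apply the corollary. The proposal is correct and just spells out the same two-line substitution in more detail.
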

\begin{proof}
Straightforward by making use of the Corollary \ref{cor:cor} and Theorem \ref{thm:artin}.
\end{proof}

\section{Preliminary Lemmas}

We will require some auxiliary lemmas. Firstly, we will begin by stating a result due to Faifman and Rudnick \cite{4}. 
\begin{lemma}[Faifman--Rudnick]
\label{lem:RHfunctionfields}
Let $\chi$ be a nontrivial Dirichlet character modulo $D$. Then for $n<\mathrm{deg}(D)$,
\begin{equation}
\Bigg|\sum_{\substack{B \ \mathrm{monic} \\ \mathrm{deg}(B)=n}}\chi(B)\Bigg|\leq\binom{\mathrm{deg}(D)-1}{n}q^{n/2}.
\end{equation}
\end{lemma}
We now prove a bound for non--trivial character sums using the Lemma \ref{lem:RHfunctionfields}, which is a consequence of the Riemann Hypothesis for function fields.

\begin{lemma}\label{lem:nonsquare}
We have that, 
\begin{enumerate}
\item
\begin{equation}\label{eq:non1}
\sum_{D\in\mathcal{H}_{2g+1,q}}\sum_{n=0}^{g}q^{-n}\sum_{\substack{f \ \mathrm{monic}\\ \mathrm{deg}(f)=n \\ f\neq\square}}\chi_{D}(f)\ll(2q)^{g}.
\end{equation}
\item
\begin{equation}\label{eq:non2}
q^{-g}\sum_{D\in\mathcal{H}_{2g+1,q}}\sum_{m=0}^{g-1}\sum_{\substack{f \ \mathrm{monic}\\ \mathrm{deg}(f)=m \\ f\neq\square}}\chi_{D}(f)\ll (2q)^{g}.
\end{equation}
\end{enumerate}
\end{lemma}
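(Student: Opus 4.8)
The plan is to fix $f$ (and the outer degree index $n$, resp.\ $m$), interchange the order of summation so that the sum over $D$ becomes the \emph{inner} sum, and recognise $\sum_{D\in\mathcal{H}_{2g+1,q}}\chi_D(f)$ as a character sum attached to the \emph{fixed} quadratic character $\left(\tfrac{\cdot}{f}\right)$ modulo $f$. Since $\chi_D(f)=\left(\tfrac{D}{f}\right)$, freezing the bottom entry and letting the top entry vary turns this into the value at $D$ of the character $\psi_f:=\left(\tfrac{\cdot}{f}\right)$; because $f\neq\square$, the character $\psi_f$ is nontrivial modulo $f$, which is exactly the hypothesis required to apply Lemma~\ref{lem:RHfunctionfields}.

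First I would strip off the squarefree restriction encoded in $\mathcal{H}_{2g+1,q}$. Using $\mu^2(D)=\sum_{e^2\mid D}\mu(e)$ to detect squarefreeness and writing $D=e^2b$ with $e,b$ monic, I obtain
\begin{equation*}
\sum_{D\in\mathcal{H}_{2g+1,q}}\psi_f(D)=\sum_{\substack{e\ \mathrm{monic}\\ \gcd(e,f)=1}}\mu(e)\sum_{\substack{b\ \mathrm{monic}\\ \deg b=2g+1-2\deg e}}\psi_f(b),
\end{equation*}
where I used $\psi_f(e^2b)=\psi_f(e)^2\psi_f(b)=\psi_f(b)$ when $\gcd(e,f)=1$ and $\psi_f(e^2b)=0$ otherwise. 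The inner sum is now a genuine character sum of the nontrivial character $\psi_f$ over monic polynomials of a fixed degree $N:=2g+1-2\deg e$.

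Next I would bound the inner sum. Since $\psi_f$ is nontrivial modulo $f$, its $L$-function is a polynomial of degree at most $\deg f-1$, so $\sum_{\deg b=N}\psi_f(b)=0$ as soon as $N\geq\deg f$; for $N<\deg f$ Lemma~\ref{lem:RHfunctionfields} gives the bound $\binom{\deg f-1}{N}q^{N/2}$ (which is consistent with the vanishing, as $\binom{\deg f-1}{N}=0$ once $N\geq\deg f$, and which applies to this possibly imprimitive character because Lemma~\ref{lem:RHfunctionfields} is stated for arbitrary nontrivial $\chi$). Summing over the $e$ of degree $\ell$, of which there are at most $q^{\ell}$, and using the crucial identity $q^{\ell}q^{N/2}=q^{\ell}q^{(2g+1-2\ell)/2}=q^{g+1/2}$, which is \emph{independent of $\ell$}, I get for every fixed monic $f$ of degree $d$
\begin{equation*}
\Bigg|\sum_{D\in\mathcal{H}_{2g+1,q}}\chi_D(f)\Bigg|\leq q^{g+1/2}\sum_{N=0}^{d-1}\binom{d-1}{N}=q^{g+1/2}\,2^{d-1}.
\end{equation*}

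Finally I would feed this into the two displays. For \eqref{eq:non1} the number of monic $f$ of degree $n$ is $q^{n}$, which cancels the weight $q^{-n}$, so the left-hand side is at most $\sum_{n=0}^{g}q^{g+1/2}2^{n-1}\ll q^{1/2}(2q)^{g}\ll(2q)^{g}$; the identical computation carrying the extra factor $q^{-g}$ disposes of \eqref{eq:non2}, giving at most $q^{-g}\sum_{m=0}^{g-1}q^{m}q^{g+1/2}2^{m-1}\ll q^{1/2}(2q)^{g-1}\ll(2q)^{g}$. I expect the main obstacle to be the bound on the inner sum: one must marry the polynomial (vanishing) structure of $L(u,\psi_f)$ with the Riemann Hypothesis estimate of Lemma~\ref{lem:RHfunctionfields}, and spot the cancellation $q^{\ell}q^{N/2}=q^{g+1/2}$ that collapses the sum over the square divisor $e$; the rest is bookkeeping. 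A small point to verify along the way is that $f=\square$ is genuinely excluded (only then is $\psi_f$ nontrivial), which in particular kills the degenerate term $f=1$ at $n=0$.
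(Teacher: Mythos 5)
Your proposal is correct and follows essentially the same route as the paper: detect the square--free condition with a M\"obius sum over square divisors of $D$, swap the order of summation so that for fixed $f\neq\square$ the inner sum is a nontrivial character sum over monic polynomials of degree $2g+1-2\deg e$, apply Lemma~\ref{lem:RHfunctionfields}, exploit the cancellation $q^{\deg e}\,q^{(2g+1-2\deg e)/2}=q^{g+1/2}$, and sum the binomial coefficients to $2^{\deg f-1}$, after which the $q^{n}$ choices of $f$ cancel the weight $q^{-n}$. Your write-up is, if anything, a bit more explicit than the paper's about the vanishing of the character sum once $2g+1-2\deg e\geq\deg f$ and about the coprimality condition on $e$, but the decomposition, the key lemma, and the bookkeeping are identical.
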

\begin{proof}
We will establish the Part (1) of this Lemma. For Part (2) we have that the proof is analogous. We start with
\begin{multline}
\sum_{D\in\mathcal{H}_{2g+1,q}}\sum_{n=0}^{g}q^{-n}\sum_{\substack{f \ \mathrm{monic}\\ \mathrm{deg}(f)=m \\ f\neq\square}}\chi_{D}(f)\\
=\sum_{n=0}^{g}q^{-n}\sum_{\substack{f \ \mathrm{monic}\\ \mathrm{deg}(f)=n \\ f\neq\square}}\sum_{\substack{D \ \mathrm{monic} \\ \mathrm{deg}(D)=2g+1}}\sum_{\substack{A \ \mathrm{monic} \\ A^{2}\mid D}}\mu(A)\left(\frac{D}{f}\right)\ \ \ \ \ \ \ \ \ \ \ \ \ \ \ \ \ \ \ \ \ \ \ \ \ \ \ \ \ \ \ \ \ \ \ \ \ \ \ \ \ \ \ \ \ \ \ \ \ \ \ \ \\
=\sum_{n=0}^{g}q^{-n}\sum_{\substack{f \ \mathrm{monic}\\ \mathrm{deg}(f)=n \\ f\neq\square}}\sum_{\substack{A \ \mathrm{monic} \\ \mathrm{deg}(A)\leq g}}\mu(A)\left(\frac{A}{f}\right)^{2}\sum_{\substack{B \ \mathrm{monic} \\ \mathrm{deg}(B)=2g+1-2\mathrm{deg}(A)}}\left(\frac{B}{f}\right).\ \ \ \ \ \ \ \ \ \ \ \ \ \ \ \ \ \ \ \ \ \ \ \ \ \ \ \ \ \ \ \
\end{multline}
Now we note that $(B/f)$ is a nontrivial character since $f\neq\square$. So we can invoke the Lemma \ref{lem:RHfunctionfields} to get the following bound
\begin{equation}
\Bigg|\sum_{\substack{B \ \mathrm{monic} \\ \mathrm{deg}(B)=2g+1-2\mathrm{deg}(A)}}\left(\frac{B}{f}\right)\Bigg|\leq\binom{\mathrm{deg}(f)-1}{2g+1-2\mathrm{deg}(A)}q^{g+\tfrac{1}{2}-\mathrm{deg}(A)}
\end{equation}
if $2g+1-2\mathrm{deg}(A)<\mathrm{deg}(f)$, and the sum is zero otherwise. So we have,
\begin{multline}
\sum_{D\in\mathcal{H}_{2g+1,q}}\sum_{n=0}^{g}q^{-n}\sum_{\substack{f \ \mathrm{monic}\\ \mathrm{deg}(f)=n \\ f\neq\square}}\chi_{D}(f)\\
\leq\sum_{n=0}^{g}q^{-n}\sum_{\substack{f \ \mathrm{monic}\\ \mathrm{deg}(f)=n \\ f\neq\square}}\sum_{\substack{A \ \mathrm{monic} \\ \mathrm{deg}(A)\leq g}}\mu(A)\Bigg|\sum_{\substack{B \ \mathrm{monic} \\ \mathrm{deg}(B)=2g+1-2\mathrm{deg}(A)}}\left(\frac{B}{f}\right)\Bigg|\ \ \ \ \ \ \ \ \ \ \ \ \ \ \ \ \ \ \ \ \ \ \ \ \ \ \ \\
\leq\sum_{n=0}^{g}q^{-n}\sum_{\substack{f \ \mathrm{monic}\\ \mathrm{deg}(f)=n \\ f\neq\square}}\sum_{j=g+\tfrac{1}{2}-\tfrac{\mathrm{deg}(f)}{2}}^{g}\sum_{\substack{A \ \mathrm{monic} \\ \mathrm{deg}(A)=j}}\binom{\mathrm{deg}(f)-1}{2g+1-2\mathrm{deg}(A)}q^{g+\tfrac{1}{2}-\mathrm{deg}(A)}\ \ \ \ \ \ \ \ \ \ \ \ \ \ \ \ \ \ \\
\ll q^{g}\sum_{n=0}^{g}q^{-n}\sum_{\substack{f \ \mathrm{monic}\\ \mathrm{deg}(f)=n}}\sum_{j=g+\tfrac{1}{2}-\tfrac{\mathrm{deg}(f)}{2}}^{g}\binom{\mathrm{deg}(f)-1}{2g+1-2j}\ \ \ \ \ \ \ \ \ \ \ \ \ \ \ \ \ \ \ \ \ \ \ \ \ \ \ \ \ \ \ \ \ \ \ \ \ \ \ \ \ \ \ \ \ \ \ \ \ \ \ \ \ \ \\
\ll q^{g}\sum_{n=0}^{g}q^{-n}\sum_{\substack{f \ \mathrm{monic}\\ \mathrm{deg}(f)=n}}2^{\mathrm{deg}(f)-1}=q^{g}\sum_{n=0}^{g}2^{n}\ll q^{g}2^{g}.\ \ \ \ \ \ \ \ \ \ \ \ \ \ \ \ \ \ \ \ \ \ \ \ \ \ \ \ \ \ \ \ \ \ \ \ \ \ \ \ \ \ \ \ \ \ \ \ \ \ \ \ \ \ \nonumber
\end{multline}
\end{proof}

We now state and prove our next two lemmas.

\begin{lemma}\label{lem:lem2}
For $|D|=q^{2g+1}$ we have,
\begin{equation}
|D|\sum_{\substack{d \ \mathrm{monic} \\ \mathrm{deg}(d)>[g/2]}}\frac{\mu(d)}{|d|^{2}}\prod_{\substack{P \ \mathrm{monic} \\ \mathrm{irreducible} \\ P\mid d}}\frac{1}{|P|+1}\ll q^{g}.
\end{equation}
\end{lemma}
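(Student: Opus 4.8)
The plan is to bound the sum by its tail in absolute value and to exploit the extra arithmetic factor $\prod_{P\mid d}(|P|+1)^{-1}$ in order to gain an additional power of $|d|$ in the denominator. Write $a(d)=\frac{\mu(d)}{|d|^{2}}\prod_{P\mid d}\frac{1}{|P|+1}$, and note that by the definition \eqref{eq:3.3} of $\mu$ the coefficient $a(d)$ is supported on squarefree $d$, so that the product runs over the \emph{distinct} prime divisors of $d$. First I would rewrite, for squarefree $d=P_{1}\cdots P_{t}$,
\[
\prod_{P\mid d}\frac{1}{|P|+1}=\frac{1}{|d|}\prod_{i=1}^{t}\frac{|P_{i}|}{|P_{i}|+1}\leq\frac{1}{|d|},
\]
since each factor $|P_{i}|/(|P_{i}|+1)<1$ and $|P_{1}|\cdots|P_{t}|=|d|$. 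This is the crucial gain: it upgrades the naive estimate $|a(d)|\leq|d|^{-2}$ to the stronger bound $|a(d)|\leq|d|^{-3}$.

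With this in hand the remainder is a geometric series. Using that there are exactly $q^{n}$ monic polynomials of degree $n$, I would estimate
\[
\sum_{\substack{d\ \mathrm{monic}\\ \mathrm{deg}(d)>[g/2]}}|a(d)|\leq\sum_{n>[g/2]}\frac{q^{n}}{q^{3n}}=\sum_{n\geq[g/2]+1}q^{-2n}=\frac{q^{-2([g/2]+1)}}{1-q^{-2}}.
\]
Since $[g/2]\geq(g-1)/2$ forces $2([g/2]+1)\geq g+1$ for every $g$ (one checks the even and odd cases separately), the last expression is $\ll q^{-(g+1)}$.

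Finally, multiplying through by $|D|=q^{2g+1}$ gives
\[
|D|\sum_{\substack{d\ \mathrm{monic}\\ \mathrm{deg}(d)>[g/2]}}\frac{\mu(d)}{|d|^{2}}\prod_{\substack{P\ \mathrm{monic}\\ \mathrm{irreducible}\\ P\mid d}}\frac{1}{|P|+1}\ll q^{2g+1}\cdot q^{-(g+1)}=q^{g},
\]
which is the claimed estimate. The only place demanding genuine care is the inequality $\prod_{P\mid d}(|P|+1)^{-1}\leq|d|^{-1}$: without this extra factor of $|d|^{-1}$ the tail would decay only like $q^{-g/2}$, and multiplication by $|D|$ would produce the far weaker $O(q^{3g/2})$. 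I therefore expect no substantive obstacle beyond recognizing that the $(|P|+1)^{-1}$ weights must be used to extract the third power of $|d|$; everything else reduces to the routine summation of a convergent geometric series.
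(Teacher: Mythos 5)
Your proof is correct and follows essentially the same route as the paper: both arguments discard $\mu$, use the bound $\prod_{P\mid d}(|P|+1)^{-1}\leq\prod_{P\mid d}|P|^{-1}=|d|^{-1}$ on squarefree $d$ to upgrade $|d|^{-2}$ to $|d|^{-3}$, and then sum the resulting geometric series $\sum_{h>[g/2]}q^{-2h}\ll q^{-(g+1)}$ before multiplying by $|D|=q^{2g+1}$. Your write-up is in fact slightly more careful than the paper's, since you make the squarefree support explicit and verify the exponent inequality $2([g/2]+1)\geq g+1$ in both parity cases.
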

\begin{proof}
\begin{eqnarray}
|D|\sum_{\substack{d \ \mathrm{monic} \\ \mathrm{deg}(d)>[g/2]}}\frac{\mu(d)}{|d|^{2}}\prod_{P\mid d}\frac{1}{|P|+1} & \leq &  |D|\sum_{\substack{d \ \mathrm{monic} \\ \mathrm{deg}(d)>[g/2]}}\frac{1}{|d|^{2}}\prod_{P\mid d}\frac{1}{|P|}\nonumber\\
& = & |D|\sum_{h>[g/2]}q^{-2h}\ll |D|q^{-g}\nonumber\\
& \ll & q^{g}.\nonumber
\end{eqnarray}
\end{proof}

\begin{lemma}\label{lem:lem3}
We have,
\begin{multline}
\frac{|D|}{\zeta_{A}(2)}\sum_{\substack{d \ \mathrm{monic} \\ \mathrm{deg}(d)\leq[g/2]}}\frac{\mu(d)}{|d|}\prod_{P\mid d}\frac{1}{|P|+1}\left(\frac{q^{-\mathrm{deg}(d)}}{1-q^{-1}}\right)\\
=|D|\prod_{\substack{P \ \mathrm{monic} \\ \mathrm{irreducible}}}\left(1-\frac{1}{(|P|+1)|P|^{2}}\right)+O(q^{g}).
\end{multline}
\end{lemma}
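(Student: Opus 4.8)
The plan is to reduce the stated identity to a completion of an Euler product, with the resulting error already controlled by Lemma \ref{lem:lem2}. The first step is a cosmetic simplification of the summand. Since $q^{-\mathrm{deg}(d)}=|d|^{-1}$ and, from the formula $\zeta_{A}(s)=(1-q^{1-s})^{-1}$, one has $\zeta_{A}(2)=(1-q^{-1})^{-1}$, the factor $(1-q^{-1})^{-1}$ appearing in the summand is exactly $\zeta_{A}(2)$ and cancels the prefactor $\zeta_{A}(2)^{-1}$. Consequently the left-hand side collapses to
\[
|D|\sum_{\substack{d \ \mathrm{monic} \\ \mathrm{deg}(d)\leq[g/2]}}\frac{\mu(d)}{|d|^{2}}\prod_{P\mid d}\frac{1}{|P|+1},
\]
which is precisely the partial sum of the series whose tail is estimated in Lemma \ref{lem:lem2}.

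Next I would complete the sum to all monic $d$, writing the above as the full series minus the tail over $\mathrm{deg}(d)>[g/2]$. The full series is an Euler product: setting $F(d)=\mu(d)|d|^{-2}\prod_{P\mid d}(|P|+1)^{-1}$, the function $F$ is multiplicative in $d$ (being a product of the multiplicative functions $\mu$, $|d|^{-2}$ and $\prod_{P\mid d}(|P|+1)^{-1}$), and it vanishes whenever $d$ is not square-free. The local factor at a monic irreducible $P$ is therefore $1+F(P)=1-\dfrac{1}{(|P|+1)|P|^{2}}$, so that
\[
\sum_{d \ \mathrm{monic}}\frac{\mu(d)}{|d|^{2}}\prod_{P\mid d}\frac{1}{|P|+1}=\prod_{\substack{P \ \mathrm{monic} \\ \mathrm{irreducible}}}\left(1-\frac{1}{(|P|+1)|P|^{2}}\right).
\]
The absolute convergence of this product follows from $\sum_{P}|P|^{-2}(|P|+1)^{-1}\ll\sum_{n\geq1}(q^{n}/n)q^{-3n}<\infty$, using that there are $\ll q^{n}/n$ monic irreducibles of degree $n$. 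Multiplying through by $|D|$ yields the claimed main term.

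It then remains to observe that the discarded tail is exactly
\[
|D|\sum_{\substack{d \ \mathrm{monic} \\ \mathrm{deg}(d)>[g/2]}}\frac{\mu(d)}{|d|^{2}}\prod_{P\mid d}\frac{1}{|P|+1},
\]
which Lemma \ref{lem:lem2} bounds by $O(q^{g})$; combining this with the previous display gives the asserted equality. There is no genuine analytic difficulty here: the only points requiring care are the bookkeeping identity $(1-q^{-1})^{-1}=\zeta_{A}(2)$ that cancels the prefactor, and the verification that the completed series factors as the stated Euler product so that its tail coincides with the quantity already estimated in Lemma \ref{lem:lem2}. The substantive work in the error term has been carried out in that lemma, so the present result is essentially its repackaging into a closed-form main term.
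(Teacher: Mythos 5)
Your proposal is correct and follows essentially the same route as the paper: cancel the prefactor via $\zeta_A(2)=(1-q^{-1})^{-1}$, complete the truncated sum to the full Euler product $\prod_P\bigl(1-\tfrac{1}{(|P|+1)|P|^2}\bigr)$, and absorb the tail using Lemma \ref{lem:lem2}. The extra details you supply (multiplicativity, local factors, convergence) are accurate and merely make explicit what the paper leaves implicit.
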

\begin{proof}
\begin{multline}
\frac{|D|}{\zeta_{A}(2)}\sum_{\substack{d \ \mathrm{monic} \\ \mathrm{deg}(d)\leq[g/2]}}\frac{\mu(d)}{|d|}\prod_{P\mid d}\frac{1}{|P|+1}\left(\frac{q^{-\mathrm{deg}(d)}}{1-q^{-1}}\right)\\
=|D|\sum_{d \ \mathrm{monic}}\frac{\mu(d)}{|d|^{2}}\prod_{P\mid d}\frac{1}{|P|+1}-|D|\sum_{\substack{d \ \mathrm{monic}\\ \mathrm{deg}(d)>[g/2]}}\frac{\mu(d)}{|d|^{2}}\prod_{P\mid d}\frac{1}{|P|+1}.
\end{multline}
Writing the sum over all monic polynomials $d$ as an Euler product and using the Lemma \ref{lem:lem2} in the sum over $d$ such that $\mathrm{deg}(d)>[g/2]$ we obtain the desired lemma.\\
\end{proof}

Using the same ideas used in the proof of Lemmas \ref{lem:lem2} and \ref{lem:lem3} we can also prove the following lemmas.

\begin{lemma}
We have,
\begin{enumerate}
\item 
\begin{equation}
|D|\sum_{\substack{d \ \mathrm{monic} \\ \mathrm{deg}(d)>[g/2]}}\frac{\mu(d)}{|d|}\prod_{P\mid d}\frac{1}{|P|+1}\ll q^{g}.
\end{equation}
\item 
\begin{equation}
\frac{|D|q^{-g}q^{[(g-1)/2]+1}}{\zeta_{A}(2)(1-q)}\sum_{\substack{d \ \mathrm{monic} \\ \mathrm{deg}(d)>[(g-1)/2]}}\frac{\mu(d)}{|d|}\prod_{P\mid d}\frac{1}{|P|+1}\ll q^{g}.
\end{equation}
\end{enumerate}
\end{lemma}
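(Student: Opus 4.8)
The plan is to treat both displays exactly as Lemma \ref{lem:lem2} and Lemma \ref{lem:lem3} were treated: each summand is the general term of a convergent Euler product, so I would discard the sign of $\mu(d)$, pass to absolute values, collapse the product over the (necessarily squarefree) support of the summand, and reduce everything to a geometric series in $\mathrm{deg}(d)$ which is then weighed against the explicit prefactor. No new input beyond the elementary estimate $\#\{d\text{ monic},\ \mathrm{deg}(d)=h\}=q^{h}$ is needed.

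Concretely, for Part (1) I would use $|\mu(d)|\le1$ and, since $\mu(d)=0$ unless $d$ is squarefree, the identity $\prod_{P\mid d}|P|^{-1}=|d|^{-1}$ together with $(|P|+1)^{-1}<|P|^{-1}$, and then extend the sum to all monic $d$, to obtain
\begin{equation}
\Bigg|\sum_{\substack{d \ \mathrm{monic}\\ \mathrm{deg}(d)>[g/2]}}\frac{\mu(d)}{|d|}\prod_{P\mid d}\frac{1}{|P|+1}\Bigg|\le\sum_{\substack{d \ \mathrm{monic}\\ \mathrm{deg}(d)>[g/2]}}\frac{1}{|d|^{2}}=\sum_{h>[g/2]}q^{-h}.
\end{equation}
Part (2) is identical with $[g/2]$ replaced by $[(g-1)/2]$, producing a tail $\ll q^{-([(g-1)/2]+1)}$; there the compensating power $q^{[(g-1)/2]+1}$ sitting in the prefactor cancels this tail exactly, leaving $|D|q^{-g}=q^{g+1}$ up to the fixed constants $\zeta_{A}(2)$ and $1-q$, which is $\ll q^{g}$ as required. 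So Part (2) closes by the purely mechanical estimate of Lemmas \ref{lem:lem2} and \ref{lem:lem3}.

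The main obstacle is the very last exponent count, and this is where the two parts differ in character. After passing to absolute values each part has the shape $(\text{prefactor})\times\sum_{h>N}q^{-h}\ll(\text{prefactor})\times q^{-N}$, so everything hinges on the decay rate of the degree-$h$ shell. With the weight $|d|^{-2}$ of Lemma \ref{lem:lem2} the shell is $O(q^{-2h})$, which against $|D|=q^{2g+1}$ and $N=[g/2]$ telescopes down to size $q^{g}$; by contrast the weight $|d|^{-1}$ appearing in Part (1) only yields $O(q^{-h})$ per shell. Thus the crux of Part (1) is to recover the $q^{-2h}$ rate, equivalently to retain the full squared denominator that made Lemma \ref{lem:lem2} close, so that the large factor $|D|$ can still be absorbed to size $q^{g}$; Part (2) needs no such care because its built-in factor $q^{[(g-1)/2]+1}$ already supplies exactly the missing power. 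Getting this final balance right is the only substantive point, the remainder being the same routine geometric estimation already performed for Lemmas \ref{lem:lem2} and \ref{lem:lem3}.
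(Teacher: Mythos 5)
Your treatment of Part (2) is correct and is exactly what the paper means by ``the same ideas as in Lemmas \ref{lem:lem2} and \ref{lem:lem3}'': after $|\mu(d)|\le 1$ and $\prod_{P\mid d}(|P|+1)^{-1}<|d|^{-1}$ for squarefree $d$, each degree-$h$ shell contributes at most $q^{h}\cdot q^{-2h}=q^{-h}$, the tail over $\mathrm{deg}(d)>[(g-1)/2]$ is $\ll q^{-[(g-1)/2]}$, and the prefactor $|D|q^{-g}q^{[(g-1)/2]+1}$ absorbs this to $\ll |D|q^{-g}\ll q^{g}$, since $q$ is fixed.

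Part (1), however, does not close by this route, and your final paragraph concedes the difficulty without resolving it. Your own displayed bound gives
\begin{equation*}
|D|\Bigg|\sum_{\substack{d \ \mathrm{monic}\\ \mathrm{deg}(d)>[g/2]}}\frac{\mu(d)}{|d|}\prod_{P\mid d}\frac{1}{|P|+1}\Bigg|\le |D|\sum_{h>[g/2]}q^{-h}\asymp q^{2g+1-[g/2]},
\end{equation*}
which is of order $q^{3g/2}$ and exceeds the target $q^{g}$ by roughly $q^{g/2}$. There is no ``squared denominator to retain'': unlike Lemma \ref{lem:lem2}, the summand here genuinely carries only $|d|^{-1}\prod_{P\mid d}(|P|+1)^{-1}\asymp |d|^{-2}$, so the degree-$h$ shell is $q^{-h}$, not $q^{-2h}$, and no termwise absolute-value estimate can recover the missing power. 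Proving the display of Part (1) as written would require exploiting the cancellation in $\mu(d)$ --- for instance via $\sum_{\mathrm{deg}(d)=h}\mu(d)/|d|=0$ for $h\ge 2$, equivalently the vanishing of $\prod_{P}\bigl(1-u^{\mathrm{deg}(P)}/(|P|+1)\bigr)$ at $u=1$ --- which is a genuinely different and more delicate argument than the one the paper gestures at. The saving grace is that where Part (1) is actually invoked (in establishing Part (1) of Lemma \ref{lem:lem5}), the tail sum always appears multiplied by the extra factor $q^{-[g/2]-1}/(1-q^{-1})$, and with that factor present your trivial estimate does suffice, giving $\ll |D|q^{-[g/2]-1}q^{-[g/2]}\ll q^{g+2}\ll q^{g}$. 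So your method proves the bound the paper needs downstream, but not the statement as displayed; you should either supply the cancellation argument or restate Part (1) with the compensating power of $q$ included, as in Part (2).
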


\begin{lemma}\label{lem:lem5}
We have,
\begin{enumerate}
\item
\begin{multline}
\frac{|D|}{\zeta_{A}(2)}\sum_{\substack{d \ \mathrm{monic} \\ \mathrm{deg}(d)\leq[g/2]}}\frac{\mu(d)}{|d|}\prod_{P\mid d}\frac{1}{|P|+1}\left(\frac{q^{-[g/2]-1}}{1-q^{-1}}\right)\\
=|D|q^{-[g/2]-1}\prod_{\substack{P \ \mathrm{monic} \\ \mathrm{irreducible}}}\left(1-\frac{1}{|P|(|P|+1)}\right)+O(q^{g}).
\end{multline}
\item
\begin{multline}
\frac{|D|q^{-g}}{\zeta_{A}(2)}\sum_{\substack{d \ \mathrm{monic} \\ \mathrm{deg}(d)\leq[(g-1)/2]}}\frac{\mu(d)}{|d|}\prod_{P\mid d}\frac{1}{|P|+1}\left(\frac{q^{[(g-1)/2]+1}}{1-q}\right)\\
=\frac{|D|q^{-g}q^{[(g-1)/2]+1}}{\zeta_{A}(2)(1-q)}\prod_{\substack{P \ \mathrm{monic} \\ \mathrm{irreducible}}}\left(1-\frac{1}{|P|(|P|+1)}\right)+O(q^{g}).
\end{multline}
\end{enumerate}
\end{lemma}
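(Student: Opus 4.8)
My plan is to mirror the proof of Lemma \ref{lem:lem3} almost verbatim, the essential difference being that here the factor multiplying the arithmetic sum is a constant independent of $d$, so that combining it with $1/|d|$ produces a single rather than a double power of $|P|$ in the resulting Euler product (this is why the product has the shape $1-\tfrac{1}{|P|(|P|+1)}$ rather than $1-\tfrac{1}{(|P|+1)|P|^{2}}$).

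For Part (1), the first step is to absorb the scalar prefactor. Since $\zeta_{A}(s)=(1-q^{1-s})^{-1}$ forces $\zeta_{A}(2)=(1-q^{-1})^{-1}$, I would simplify
\[
\frac{|D|}{\zeta_{A}(2)}\cdot\frac{q^{-[g/2]-1}}{1-q^{-1}}=|D|\,q^{-[g/2]-1},
\]
so that the left-hand side is $|D|q^{-[g/2]-1}$ times the truncated sum $\sum_{\mathrm{deg}(d)\leq[g/2]}\tfrac{\mu(d)}{|d|}\prod_{P\mid d}\tfrac{1}{|P|+1}$. Next I would write this truncated sum as the complete sum over all monic $d$ minus the tail over $\mathrm{deg}(d)>[g/2]$. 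Because $\mu$ is supported on the square-free $d$, expanding the complete sum into an Euler product over monic irreducibles gives
\[
\sum_{d\ \mathrm{monic}}\frac{\mu(d)}{|d|}\prod_{P\mid d}\frac{1}{|P|+1}=\prod_{\substack{P\ \mathrm{monic}\\ \mathrm{irreducible}}}\left(1-\frac{1}{|P|(|P|+1)}\right),
\]
the product converging absolutely since each local factor differs from $1$ by $O(|P|^{-2})$. Multiplying back by $|D|q^{-[g/2]-1}$ reproduces exactly the main term claimed.

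It then remains to show the tail is negligible, and this is where I would call on the (unlabeled) Lemma stated immediately before Lemma \ref{lem:lem5}: its Part (1) gives $|D|\sum_{\mathrm{deg}(d)>[g/2]}\tfrac{\mu(d)}{|d|}\prod_{P\mid d}\tfrac{1}{|P|+1}\ll q^{g}$, and since $q^{-[g/2]-1}\leq 1$ the tail contribution in Part (1) is $\ll q^{g}$, producing the stated error. Part (2) runs the same way: the prefactor is now $\frac{|D|q^{-g}q^{[(g-1)/2]+1}}{\zeta_{A}(2)(1-q)}$ (which does \emph{not} collapse, as the denominator is $1-q$ rather than $1-q^{-1}$), the Euler-product evaluation of the complete sum is identical, and the tail is dispatched directly by Part (2) of the same preceding Lemma, which already carries this precise prefactor and is bounded by $q^{g}$.

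No step here is analytically hard. The only place demanding care is the bookkeeping: I must confirm that the constant standing in front of the complete sum matches the constant standing in front of the Euler product on the right-hand side of each part, and that the tail bounds borrowed from the preceding lemma are exactly those obtained once the prefactor has been reinstated. Once those constants are matched correctly, both identities follow.
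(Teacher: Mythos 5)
Your proposal is correct and follows the same route the paper intends for Lemma \ref{lem:lem5} (the paper gives no written proof, only the remark that it uses the same ideas as Lemmas \ref{lem:lem2} and \ref{lem:lem3}): write the truncated sum as the complete sum, evaluated via the Euler product $\prod_{P}\bigl(1-\tfrac{1}{|P|(|P|+1)}\bigr)$, minus a tail handled by the unnumbered lemma preceding the statement, with the scalar prefactors matched exactly as you describe. One small caution on Part (1): it is safer to keep the factor $q^{-[g/2]-1}$ attached to the tail before estimating, since the trivial bound $\tfrac{1}{|d|}\prod_{P\mid d}\tfrac{1}{|P|+1}\le\tfrac{1}{|d|^{2}}$ then yields $\ll q^{2g-1-2[g/2]}\ll q^{g}$ directly, whereas the unnumbered lemma's Part (1) as literally stated (without that factor) does not follow from the same trivial absolute-value estimate, so routing the argument through it and then multiplying by $q^{-[g/2]-1}\le 1$ leans on a bound that is harder to justify than the one you actually need.
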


We present now our last lemma,

\begin{lemma}\label{lem:lem6}
We have,
\begin{equation}
\frac{|D|q^{-g}}{\zeta_{A}(2)(1-q)}\sum_{\substack{d \ \mathrm{monic} \\ \mathrm{deg}(d)\leq[(g-1)/2]}}\mu(d)\prod_{P\mid d}\frac{1}{|P|+1}\ll gq^{g}.
\end{equation}
\end{lemma}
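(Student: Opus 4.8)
The plan is to estimate the sum by the triangle inequality, in exactly the same crude style as the proofs of Lemmas \ref{lem:lem2} and \ref{lem:lem3}. The one new feature, which accounts for the extra factor of $g$ on the right-hand side, is that the summand here carries no damping factor $|d|^{-1}$, so the contribution of each fixed degree no longer decays geometrically.

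First I would isolate the constant prefactor. Since $q$ is fixed, both $\zeta_{A}(2)=(1-q^{-1})^{-1}$ and $(1-q)$ are nonzero constants, so with $|D|=q^{2g+1}$ we have
\[
\left|\frac{|D|q^{-g}}{\zeta_{A}(2)(1-q)}\right|\asymp |D|q^{-g}=q^{2g+1}q^{-g}=q^{g+1}\ll q^{g}.
\]
It therefore suffices to show that the remaining $d$-sum is $\ll g$. Bounding $|\mu(d)|\leq 1$ (so that only squarefree $d$ contribute) and $\frac{1}{|P|+1}\leq\frac{1}{|P|}$, I would write
\[
\left|\sum_{\substack{d \ \mathrm{monic} \\ \mathrm{deg}(d)\leq[(g-1)/2]}}\mu(d)\prod_{P\mid d}\frac{1}{|P|+1}\right|
\leq\sum_{h=0}^{[(g-1)/2]}\ \sum_{\substack{d \ \mathrm{monic}, \ \mathrm{squarefree} \\ \mathrm{deg}(d)=h}}\ \prod_{P\mid d}\frac{1}{|P|}.
\]
For a squarefree monic $d$ of degree $h$ one has $\prod_{P\mid d}|P|=|d|=q^{h}$, hence $\prod_{P\mid d}\frac{1}{|P|}=q^{-h}$; and the number of squarefree monic polynomials of degree $h$ is at most $q^{h}$. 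Thus the inner sum over $d$ of degree $h$ is at most $q^{h}\cdot q^{-h}=1$, uniformly in $h$.

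Summing this bound over $0\leq h\leq[(g-1)/2]$ yields at most $[(g-1)/2]+1\ll g$, and combining with the estimate for the prefactor gives the claimed bound $\ll gq^{g}$. The point worth emphasising, and the only place where this lemma differs from Lemma \ref{lem:lem2}, is the absence of the factor $|d|^{-1}$: in Lemma \ref{lem:lem2} the degree-$h$ contribution decays like $q^{-2h}$ and the geometric series is $O(1)$, whereas here each degree contributes $O(1)$ and the summation range has length $\approx g/2$, which is precisely what produces the linear factor $g$. There is no genuine obstacle; the only care required is to verify that every factor outside the $d$-sum amounts to a constant times $q^{g}$, so that the $g$ in $gq^{g}$ comes solely from the length of the range of summation.
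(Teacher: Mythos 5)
Your proposal is correct and follows essentially the same route as the paper: bound $\mu(d)\prod_{P\mid d}(|P|+1)^{-1}$ by $|d|^{-1}$, note that each degree $h$ then contributes at most $q^{h}\cdot q^{-h}=1$ so the $d$-sum is $\ll g$, and absorb the prefactor $|D|q^{-g}=q^{g+1}$ into $O(q^{g})$. The paper's proof is just a more compressed version of exactly this computation.
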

\begin{proof}
We have that,
\begin{eqnarray}
\frac{|D|q^{-g}}{\zeta_{A}(2)(1-q)}\sum_{\substack{d \ \mathrm{monic} \\ \mathrm{deg}(d)\leq[(g-1)/2]}}\mu(d)\prod_{P\mid d}\frac{1}{|P|+1}&\ll&|D|q^{-g}\sum_{\substack{d \ \mathrm{monic} \\ \mathrm{deg}(d)\leq[(g-1)/2]}}\frac{1}{|d|}\nonumber\\
&\ll&|D|q^{-g}([(g-1)/2]+1)\nonumber\\
&\ll& gq^{g}.\nonumber
\end{eqnarray}
\end{proof}

\section{The Main--Term}
The main result of this section is the following proposition, which will be used to establish the Main-Term of Theorem \ref{thm:mainthm1}.
\begin{proposition}\label{prop:2}
\begin{equation}
\sum_{\substack{D\in\mathcal{H}_{2g+1,q} \\ (D,l)=1}}1=\frac{|D|}{\zeta_{A}(2)\prod_{P|l}(1+|P|^{-1})}+O\left(\sqrt{|D|}\frac{\Phi(l)}{|l|}\right),
\end{equation}
\end{proposition}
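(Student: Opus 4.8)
The plan is to detect the square--free condition by the convolution identity $\sum_{A^{2}\mid D}\mu(A)=1$ when $D$ is square--free and $0$ otherwise, exactly as in the proof of Lemma~\ref{lem:nonsquare}. Writing
\begin{equation*}
\sum_{\substack{D\in\mathcal{H}_{2g+1,q}\\(D,l)=1}}1=\sum_{\substack{D \ \mathrm{monic},\ \mathrm{deg}(D)=2g+1\\(D,l)=1}}\ \sum_{\substack{A \ \mathrm{monic}\\A^{2}\mid D}}\mu(A),
\end{equation*}
I would interchange the two sums. For fixed monic $A$ with $\mathrm{deg}(A)\le g$, the condition $A^{2}\mid D$ lets me write $D=A^{2}B$ with $B$ monic of degree $2g+1-2\,\mathrm{deg}(A)$; then $(D,l)=1$ is equivalent to $(A,l)=1$ together with $(B,l)=1$, since a common prime factor of $A$ and $l$ would divide $D$. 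Hence the inner count becomes $N(A)$, the number of monic $B$ of degree $2g+1-2\,\mathrm{deg}(A)$ coprime to $l$, with $A$ automatically restricted to $(A,l)=1$.

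Next I would evaluate $N(A)$. Distributing the monic polynomials of a given degree into residue classes modulo $l$ shows that, as soon as $2g+1-2\,\mathrm{deg}(A)\ge\mathrm{deg}(l)$, each invertible class is hit equally often, so that $N(A)=q^{\,2g+1-2\,\mathrm{deg}(A)}\,\Phi(l)/|l|=|D|\,|A|^{-2}\,\Phi(l)/|l|$. Substituting this and extending the $A$--sum to all monic $A$ coprime to $l$ yields the main term as an Euler product,
\begin{equation*}
\sum_{\substack{A \ \mathrm{monic}\\(A,l)=1}}\frac{\mu(A)}{|A|^{2}}=\prod_{P\nmid l}\left(1-\frac{1}{|P|^{2}}\right)=\frac{1}{\zeta_{A}(2)\prod_{P\mid l}\left(1-|P|^{-2}\right)},
\end{equation*}
and combining it with $\Phi(l)/|l|=\prod_{P\mid l}\left(1-|P|^{-1}\right)$ collapses the local factors to $\prod_{P\mid l}\left(1+|P|^{-1}\right)^{-1}$, giving precisely $|D|\big/\big(\zeta_{A}(2)\prod_{P\mid l}(1+|P|^{-1})\big)$.

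Two error contributions remain. The first comes from completing the $A$--sum: the discarded range $\mathrm{deg}(A)>g$ contributes at most $\frac{|D|\Phi(l)}{|l|}\sum_{\mathrm{deg}(A)>g}|A|^{-2}\ll\frac{|D|\Phi(l)}{|l|}q^{-g}\ll\sqrt{|D|}\,\Phi(l)/|l|$, a routine geometric estimate. The second, which I expect to be the main obstacle, is the boundary range $2g+1-2\,\mathrm{deg}(A)<\mathrm{deg}(l)$, where the clean formula for $N(A)$ breaks down and must be replaced by its exact value $\sum_{\substack{e\mid l,\ \mathrm{deg}(e)\le 2g+1-2\,\mathrm{deg}(A)}}\mu(e)\,q^{\,2g+1-2\,\mathrm{deg}(A)-\mathrm{deg}(e)}$. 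A naive triangle--inequality bound over this range is too lossy, producing a spurious factor $|l|^{1/2}$ instead of $\Phi(l)/|l|$, so the cancellation carried by $\mu$ must be retained. The cleanest device is to recognise the entire count as the coefficient of $u^{2g+1}$ in the rational function $\frac{1-qu^{2}}{(1-qu)\prod_{P\mid l}(1+u^{\mathrm{deg}(P)})}$, whose only pole in $|u|<1$ lies at $u=1/q$ and reproduces the main term, while every remaining pole lies on $|u|=1$ and hence contributes to the coefficient at lower order in $|D|$; the delicate point that must be handled to land the stated error term is controlling the dependence of those secondary residues on $l$.
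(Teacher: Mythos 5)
Your proposal follows essentially the same route as the paper's own proof: M\"obius detection of the square--free condition, interchange of summation, the count $\#\{D\in V_{d}:(D,l)=1\}=q^{d}\Phi(l)/|l|$ (Lemma~\ref{lem:4}), completion of $\sum\mu(Q)/|Q|^{2}$ to the Euler product $\zeta_{A}(2)^{-1}\prod_{P\mid l}\left(1-|P|^{-2}\right)^{-1}$ (Lemmas~\ref{lem:5} and~\ref{lem:6}), and the identical geometric tail estimate producing the $O\left(\sqrt{|D|}\,\Phi(l)/|l|\right)$ error. The boundary range $2g+1-2\,\mathrm{deg}(A)<\mathrm{deg}(l)$ that you single out as the main obstacle is simply not addressed in the paper --- Lemma~\ref{lem:4} asserts the clean formula for every $d$, although its proof tacitly assumes $\mathrm{deg}(h)\le d$ for each square--free divisor $h$ of $l$ --- so your generating--function device is extra care beyond what the paper supplies rather than a different method.
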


We will need the following lemmas:
\begin{lemma}\label{lem:4}
Let $V_{d}=\{D\in\mathbb{F}_{q}[x]: D \ \mathrm{monic}, \ \mathrm{deg}(D)=d\}$. Then,
\begin{equation}
\#\{D\in V_{d}: (D,l)=1\}=q^{d}\frac{\Phi(l)}{|l|}.
\end{equation}
\end{lemma}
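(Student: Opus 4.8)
The plan is to count the monic polynomials of degree $d$ coprime to $l$ by sieving out the common factors with $l$ via M\"obius inversion. Writing the coprimality indicator as a sum over common monic divisors and interchanging the order of summation, I would start from
\[
\#\{D\in V_d : (D,l)=1\} = \sum_{\substack{e\mid l \\ e \ \mathrm{monic}}} \mu(e)\,\#\{D\in V_d : e\mid D\},
\]
which is simply the function-field analogue of $\sum_{e\mid m}\mu(e)=[m=1]$ applied with $m=\gcd(D,l)$, using that $e\mid\gcd(D,l)$ is equivalent to $e\mid D$ and $e\mid l$.

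Next I would evaluate the inner count. For a fixed monic $e\mid l$ with $\deg(e)\le d$, the map $D\mapsto D/e$ is a bijection between $\{D\in V_d : e\mid D\}$ and the set of monic polynomials of degree $d-\deg(e)$, so $\#\{D\in V_d : e\mid D\}=q^{d-\deg(e)}=q^d/|e|$; when $\deg(e)>d$ the set is empty. Substituting this, and using that in the regime of interest $d\ge\deg(l)$ so that every divisor $e\mid l$ contributes, gives
\[
\#\{D\in V_d : (D,l)=1\} = q^d\sum_{e\mid l}\frac{\mu(e)}{|e|}.
\]
Finally I would recognise the remaining sum as an Euler product: since $\mu$ is supported on square-free polynomials and both $\mu$ and $|\cdot|^{-1}$ are multiplicative, $\sum_{e\mid l}\mu(e)/|e| = \prod_{P\mid l}(1-|P|^{-1})$, and this last product is exactly $\Phi(l)/|l|$ by the standard multiplicative formula for the function-field totient, which yields the claimed identity.

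The computation is essentially routine; the only point requiring care is the truncation of the divisor sum. The clean closed form $q^d\,\Phi(l)/|l|$ presupposes $d\ge\deg(l)$, which is harmless here because the lemma is applied with $d=2g+1\to\infty$ while the modulus $l$ is held fixed. An alternative, truncation-free route that avoids this bookkeeping is to reduce $V_d$ modulo $l$: for $d\ge\deg(l)$ the coefficients $a_0,\dots,a_{\deg(l)-1}$ range freely while the top coefficient stays monic, so each residue class in $\mathbb{F}_q[x]/(l)$ is represented exactly $q^{d-\deg(l)}$ times; counting the invertible residues then gives $q^{d-\deg(l)}\Phi(l)=q^d\,\Phi(l)/|l|$ directly. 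I would present the M\"obius version as the primary argument, since it matches the Euler-product manipulations used elsewhere in the paper.
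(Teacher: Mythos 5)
Your proof is correct and follows essentially the same route as the paper's: a M\"obius sieve over the monic divisors of $l$, the count $q^{d-\deg(e)}$ of monic multiples of $e$ in $V_{d}$, and the identity $\sum_{e\mid l}\mu(e)/|e|=\prod_{P\mid l}\left(1-|P|^{-1}\right)=\Phi(l)/|l|$. Your remark that the closed form presupposes $d\ge\deg(l)$ is a point the paper's proof passes over silently, and it is indeed harmless in the intended application where $d=2g+1\to\infty$ with $l$ fixed.
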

\begin{proof}
\begin{eqnarray}
\#\{D\in V_{d}:(D,l)=1\}& = &\sum_{\substack{D \ \mathrm{monic} \\ \mathrm{deg}(D)=d \\ (D,l)=1}}1=\sum_{\substack{D \ \mathrm{monic} \\ \mathrm{deg}(D)=d}}\sum_{h|(D,l)}\mu(h)\nonumber\\
& = & \sum_{h|l}\mu(h)\sum_{\substack{D \ \mathrm{monic} \\ \mathrm{deg}(D)=d \\ h|D}}1=\sum_{h|l}\mu(h)\sum_{\substack{m \ \mathrm{monic} \\ \mathrm{deg}(m)=d-\mathrm{deg}h}}1\nonumber\\
\label{eq:5.5}& = & q^{d}\prod_{\substack{P \\ P|l}}\left(1-\frac{1}{|P|}\right)=q^{d}\frac{\Phi(l)}{|l|}
\end{eqnarray}
where we used \cite[Proposition 2.4]{9} in \eqref{eq:5.5}.
\end{proof}

\begin{lemma}\label{lem:5}
We have,
\begin{equation}
\sum_{\substack{Q \ \mathrm{monic} \\ \mathrm{deg}(Q)>\frac{2g+1}{2} \\ (Q,l)=1}}\frac{\mu(Q)}{|Q|^{2}}\ll q^{-1/2}q^{-g}.
\end{equation}
\end{lemma}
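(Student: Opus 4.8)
The plan is to bound the tail of an Euler-product-type sum over squarefree monic polynomials coprime to $l$. First I would recognize that the quantity $\sum_{Q}\mu(Q)|Q|^{-2}$ is essentially a partial sum of $1/\zeta_A(2)$ restricted to $Q$ coprime to $l$, and the coprimality only removes terms, so it suffices to bound the tail of the convergent series $\sum_{\deg(Q)>g+1/2}|\mu(Q)|\,|Q|^{-2}$. Since $g$ is an integer, the condition $\deg(Q)>\tfrac{2g+1}{2}=g+\tfrac12$ is equivalent to $\deg(Q)\ge g+1$.

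The key step is to group the sum by degree. Using the elementary fact that the number of monic polynomials of degree $h$ over $\mathbb{F}_q$ is $q^{h}$ (so in particular at most $q^{h}$ squarefree ones), I would estimate
\begin{equation}
\Bigg|\sum_{\substack{Q \ \mathrm{monic} \\ \deg(Q)>\frac{2g+1}{2} \\ (Q,l)=1}}\frac{\mu(Q)}{|Q|^{2}}\Bigg|
\le \sum_{h\ge g+1}\ \sum_{\substack{Q \ \mathrm{monic}\\ \deg(Q)=h}}\frac{1}{|Q|^{2}}
= \sum_{h\ge g+1} q^{h}\cdot q^{-2h}
= \sum_{h\ge g+1} q^{-h}.
\end{equation}
This last geometric series sums to $\dfrac{q^{-(g+1)}}{1-q^{-1}}$, which is $\ll q^{-g-1}$. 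Comparing with the claimed bound $q^{-1/2}q^{-g}=q^{-g-1/2}$, one sees that $q^{-g-1}\le q^{-g-1/2}$ for $q\ge 1$, so the desired estimate follows immediately.

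I expect the only genuine subtlety to be reconciling the half-integer exponent $\tfrac{2g+1}{2}$ appearing in the summation range with the integer degrees of polynomials; the cleanest way to handle this is to note, as above, that the range collapses to $\deg(Q)\ge g+1$, after which everything is a routine geometric-series bound and the coprimality condition $(Q,l)=1$ plays no role beyond restricting to a subset of the terms (which can only decrease the sum in absolute value after the triangle inequality is applied). There is no real obstacle here; this lemma is essentially a convergence/tail estimate used to discard negligible contributions in the proof of Proposition \ref{prop:2}.
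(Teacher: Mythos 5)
Your proposal is correct and matches the paper's own proof: both drop $\mu(Q)$ and the coprimality condition via the triangle inequality, group by degree using the count of $q^{h}$ monic polynomials of degree $h$, and sum the resulting geometric series $\sum_{h\ge g+1}q^{-h}\ll q^{-g-1}\le q^{-g-1/2}$. Your explicit remark that the range $\deg(Q)>\tfrac{2g+1}{2}$ collapses to $\deg(Q)\ge g+1$ is a small clarification the paper leaves implicit, but the argument is the same.
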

\begin{proof}
\begin{eqnarray}
\sum_{\substack{Q \ \mathrm{monic} \\ \mathrm{deg}(Q)>\frac{2g+1}{2} \\ (Q,l)=1}}\frac{\mu(Q)}{|Q|^{2}} & \leq & \sum_{\substack{Q \ \mathrm{monic} \\ \mathrm{deg}(Q)>\frac{2g+1}{2} \\ (Q,l)=1}}\frac{1}{|Q|^{2}}\nonumber\\
& = & \sum_{n>\frac{2g+1}{2}}\frac{1}{q^{n}}\ll q^{-1/2}q^{-g}.
\end{eqnarray}
\end{proof}

\begin{lemma}\label{lem:6}
We have that,
\begin{equation}
\sum_{\substack{Q \ \mathrm{monic} \\ \mathrm{deg}(Q)\leq\frac{2g+1}{2} \\ (Q,l)=1}}\frac{\mu(Q)}{|Q|^{2}}=\frac{1}{\zeta_{A}(2)}\frac{1}{\prod_{P|l}(1-1/|P|^{2})}+O(q^{-1/2}q^{-g}).
\end{equation}
\end{lemma}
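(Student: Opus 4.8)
The plan is to evaluate the sum by completing it to a sum over \emph{all} monic $Q$ coprime to $l$, recognising that complete sum as an Euler product, and then controlling the discarded tail with the bound already established in Lemma \ref{lem:5}. Concretely, I would first write
\begin{equation}
\sum_{\substack{Q \ \mathrm{monic} \\ \mathrm{deg}(Q)\leq\frac{2g+1}{2} \\ (Q,l)=1}}\frac{\mu(Q)}{|Q|^{2}}
=\sum_{\substack{Q \ \mathrm{monic} \\ (Q,l)=1}}\frac{\mu(Q)}{|Q|^{2}}
-\sum_{\substack{Q \ \mathrm{monic} \\ \mathrm{deg}(Q)>\frac{2g+1}{2} \\ (Q,l)=1}}\frac{\mu(Q)}{|Q|^{2}},
\end{equation}
where the first sum on the right converges absolutely since $\sum_{Q}|Q|^{-2}=\sum_{n\geq0}q^{n}q^{-2n}<\infty$, which is exactly what legitimises the splitting.

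The next step is to identify the complete sum. Because $\mu$ is multiplicative and supported on square-free polynomials, the coprimality condition $(Q,l)=1$ simply removes the Euler factors at primes dividing $l$, giving
\begin{equation}
\sum_{\substack{Q \ \mathrm{monic} \\ (Q,l)=1}}\frac{\mu(Q)}{|Q|^{2}}
=\prod_{\substack{P \ \mathrm{monic} \\ \mathrm{irreducible} \\ P\nmid l}}\left(1-\frac{1}{|P|^{2}}\right).
\end{equation}
I would then use $1/\zeta_{A}(2)=\prod_{P}\left(1-|P|^{-2}\right)$ (the Euler product for $\zeta_{A}$ at $s=2$) to reinstate the missing factors, writing the product over $P\nmid l$ as the full product divided by $\prod_{P\mid l}(1-|P|^{-2})$. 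This yields precisely the main term $\frac{1}{\zeta_{A}(2)}\frac{1}{\prod_{P|l}(1-1/|P|^{2})}$.

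Finally, the tail sum over $\mathrm{deg}(Q)>\frac{2g+1}{2}$ is exactly the quantity bounded in Lemma \ref{lem:5}, so it contributes $O(q^{-1/2}q^{-g})$, matching the stated error term. There is no genuine obstacle here: the argument is a routine ``truncate an absolutely convergent series and bound the tail'' computation. The only point requiring a little care is the Euler-product bookkeeping in the second step—making sure the coprimality condition is translated correctly into the omission of the factors at $P\mid l$, and that the identity $1/\zeta_{A}(2)=\prod_{P}(1-|P|^{-2})$ is invoked at the right place so that the factor $\prod_{P\mid l}(1-1/|P|^{2})$ ends up in the denominator rather than the numerator.
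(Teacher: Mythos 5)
Your proposal is correct and follows essentially the same route as the paper: split off the tail $\mathrm{deg}(Q)>\frac{2g+1}{2}$, identify the complete sum as the Euler product $\prod_{P\nmid l}(1-|P|^{-2})$, rewrite it as $\frac{1}{\zeta_{A}(2)}\prod_{P\mid l}(1-|P|^{-2})^{-1}$, and bound the tail by Lemma \ref{lem:5}. Nothing further is needed.
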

\begin{proof}

\begin{eqnarray}
\sum_{\substack{Q \ \mathrm{monic} \\ \mathrm{deg}(Q)\leq\frac{2g+1}{2} \\ (Q,l)=1}}\frac{\mu(Q)}{|Q|^{2}} & = & \sum_{\substack{Q \ \mathrm{monic} \\ (Q,l)=1}}\frac{\mu(Q)}{|Q|^{2}}-\sum_{\substack{Q \ \mathrm{monic} \\ \mathrm{deg}(Q)>\frac{2g+1}{2} \\ (Q,l)=1}}\frac{\mu(Q)}{|Q|^{2}}\nonumber\\
& = & \prod_{P\nmid l}\left(1-\frac{1}{|P|^{2}}\right)-\sum_{\substack{Q \ \mathrm{monic} \\ \mathrm{deg}(Q)>\frac{2g+1}{2} \\ (Q,l)=1}}\frac{\mu(Q)}{|Q|^{2}},
\end{eqnarray}
and
\begin{eqnarray}
\prod_{P\nmid l}\left(1-\frac{1}{|P|^{2}}\right) & = & \prod_{P}\left(1-\frac{1}{|P|^{2}}\right)\prod_{P\mid l}\left(1-\frac{1}{|P|^{2}}\right)^{-1}\nonumber\\
& = & \frac{1}{\zeta_{A}(2)}\frac{1}{\prod_{P\mid l}(1-1/|P|^{2})}.
\end{eqnarray}
Thus,
\begin{equation}
\sum_{\substack{Q \ \mathrm{monic} \\ \mathrm{deg}(Q)\leq\frac{2g+1}{2} \\ (Q,l)=1}}\frac{\mu(Q)}{|Q|^{2}}=\frac{1}{\zeta_{A}(2)}\frac{1}{\prod_{P|l}(1-1/|P|^{2})}-\sum_{\substack{Q \ \mathrm{monic} \\ \mathrm{deg}(Q)>\frac{2g+1}{2} \\ (Q,l)=1}}\frac{\mu(Q)}{|Q|^{2}},
\end{equation}
and using the estimate of Lemma \ref{lem:5} proves the result.
\end{proof}

\begin{proof}[Proof of Proposition \ref{prop:2}]
We have that
\begin{eqnarray}
\sum_{\substack{D\in\mathcal{H}_{2g+1,q} \\ (D,l)=1}}1 &=& \sum_{\substack{D\in V_{2g+1} \\ (D,l)=1}}\sum_{Q^{2}\mid D}\mu(Q)=\sum_{\substack{Q \ \mathrm{monic} \\ \mathrm{deg}(Q)\leq\frac{2g+1}{2} \\ (Q,l)=1}}\mu(Q)\sum_{\substack{D\in V_{2g+1-2\mathrm{deg}(Q)} \\ (D,l)=1}}1\nonumber\\
&=& \sum_{\substack{Q \ \mathrm{monic} \\ \mathrm{deg}(Q)\leq\frac{2g+1}{2} \\ (Q,l)=1}}\mu(Q)\#\{D\in V_{2g+1-2\mathrm{deg}(Q)}:(D,l)=1\}.
\end{eqnarray}
By Lemma \ref{lem:4}, we have,
\begin{eqnarray}
\sum_{\substack{D\in\mathcal{H}_{2g+1,q} \\ (D,l)=1}}1 & = & \sum_{\substack{Q \ \mathrm{monic} \\ \mathrm{deg}(Q)\leq\frac{2g+1}{2} \\ (Q,l)=1}}\mu(Q)q^{2g+1-2\mathrm{deg}(Q)}\frac{\Phi(l)}{|l|}\nonumber\\
& = & |D|\frac{\Phi(l)}{|l|}\sum_{\substack{Q \ \mathrm{monic} \\ \mathrm{deg}(Q)\leq\frac{2g+1}{2} \\ (Q,l)=1}}\frac{\mu(Q)}{|Q|^{2}}.
\end{eqnarray}
Invoking Lemma \ref{lem:6} we obtain,
\begin{eqnarray}
\sum_{\substack{D\in\mathcal{H}_{2g+1,q} \\ (D,l)=1}}1 & = & |D|\frac{\Phi(l)}{|l|}\left(\frac{1}{\zeta_{A}(2)}\frac{1}{\prod_{P|l}(1-1/|P|^{2})}+O(q^{-1/2}q^{-g})\right)\nonumber\\
& = & |D|\frac{\Phi(l)}{|l|}\frac{1}{\zeta_{A}(2)}\frac{1}{\prod_{P|l}(1-1/|P|^{2})}+O\left(|D|\frac{\Phi(l)}{|l|}q^{-1/2}q^{-g}\right),
\end{eqnarray}
and using $\displaystyle{\frac{\Phi(l)}{|l|}=\prod_{P|l}(1-|P|^{-1})}$, we end up with
\begin{equation}
\sum_{\substack{D\in\mathcal{H}_{2g+1,q} \\ (D,l)=1}}1=\frac{|D|}{\zeta_{A}(2)\prod_{P|l}(1+|P|^{-1})}+O\left(\sqrt{|D|}\frac{\Phi(l)}{|l|}\right),
\end{equation}
which proves Proposition \ref{prop:2}.
\end{proof}

\section{Proof of the Theorem 2}

Our argument in this section follows closely \cite{1}. From \eqref{eq:approx}, our main goal is to obtain an asymptotic formula for

\begin{multline}\label{eq:4.1}
\sum_{D\in\mathcal{H}_{2g+1,q}}L(1,\chi_{D})\\
=\sum_{D\in\mathcal{H}_{2g+1,q}}\sum_{n=0}^{g}\sum_{\substack{f \ \mathrm{monic} \\ \mathrm{deg}(f)=n}}\chi_{D}(f)q^{-n}+q^{-g}\sum_{D\in\mathcal{H}_{2g+1,q}}\sum_{m=0}^{g-1}\sum_{\substack{f \ \mathrm{monic} \\ \mathrm{deg}(f)=m}}\chi_{D}(f).
\end{multline}

We begin by establishing an asymptotic formula for the first term in the right--hand side of \eqref{eq:4.1}. 

\begin{multline}\label{eq:4.2}
\sum_{D\in\mathcal{H}_{2g+1,q}}\sum_{n=0}^{g}\sum_{\substack{f \ \mathrm{monic} \\ \mathrm{deg}(f)=n}}\chi_{D}(f)q^{-n}\\ 
=\sum_{n=0}^{g}q^{-n}\sum_{D\in\mathcal{H}_{2g+1,q}}\sum_{\substack{f \ \mathrm{monic} \\ \mathrm{deg}(f)=n \\ f=l^{2}}}\chi_{D}(f)+\sum_{n=0}^{g}q^{-n}\sum_{D\in\mathcal{H}_{2g+1,q}}\sum_{\substack{f \ \mathrm{monic} \\ \mathrm{deg}(f)=n \\ f\neq\square}}\chi_{D}(f)
\end{multline} 

Making use of the first part of Lemma \ref{lem:nonsquare} we can write \eqref{eq:4.2} as
\begin{equation}\label{eq:4.3}
\sum_{D\in\mathcal{H}_{2g+1,q}}\sum_{n=0}^{g}\sum_{\substack{f \ \mathrm{monic} \\ \mathrm{deg}(f)=n}}\chi_{D}(f)q^{-n}=\sum_{n=0}^{g}q^{-n}\sum_{D\in\mathcal{H}_{2g+1,q}}\sum_{\substack{f \ \mathrm{monic} \\ \mathrm{deg}(f)=n \\ f=l^{2}}}\chi_{D}(f)+O((2q)^{g}).
\end{equation}
For the square terms $f=l^{2}$ we make use of Proposition \ref{prop:2} and we end with

\begin{multline}
\label{eq:4.4}
\sum_{n=0}^{g}q^{-n}\sum_{D\in\mathcal{H}_{2g+1,q}}\sum_{\substack{f \ \mathrm{monic} \\ \mathrm{deg}(f)=n \\ f=l^{2}}}\chi_{D}(f)\\
=\frac{|D|}{\zeta_{A}(2)}\sum_{m=0}^{[g/2]}q^{-m}\sum_{\substack{d \ \mathrm{monic} \\ \mathrm{deg}(d)\leq m}}\frac{\mu(d)}{|d|}\prod_{P\mid d}\frac{1}{|P|+1}+O(q^{g/2})\ \ \ \ \ \ \ \ \ \ \ \ \ \ \ \ \ \ \ \ \ \ \ \ \ \ \ \ \ \ \ \ \ \ \ \ \ \ \ \ \ \ \ \ \ \ \ \ \\
=\frac{|D|}{\zeta_{A}(2)}\sum_{\substack{d \ \mathrm{monic} \\ \mathrm{deg}(d)\leq[g/2]}}\frac{\mu(d)}{|d|}\prod_{P\mid d}\frac{1}{|P|+1}\left(\frac{(q^{-1})^{\mathrm{deg}(d)}-(q^{-1})^{[g/2]+1}}{1-q^{-1}}\right)+O(q^{g/2}) \ \ \ \ \ \ \ \ \ \ \ \ \ \ \ \ \ \ \ \ \ \ \ \ \ \ \ \ \ \ \ \ \ \ \ \ \ \ \ \ \ \ \ \ \ \ \ \\
=\frac{|D|}{\zeta_{A}(2)}\sum_{\substack{d \ \mathrm{monic} \\ \mathrm{deg}(d)\leq[g/2]}}\frac{\mu(d)}{|d|}\prod_{P\mid d}\frac{1}{|P|+1}\left(\frac{q^{-\mathrm{deg}(d)}}{1-q^{-1}}\right)\ \ \ \ \ \ \ \ \ \ \ \ \ \ \ \ \ \ \ \ \ \ \ \ \ \ \ \ \ \ \ \ \ \ \ \ \ \ \ \ \ \ \ \ \ \ \ \\ 
-\frac{|D|}{\zeta_{A}(2)}\sum_{\substack{d \ \mathrm{monic} \\ \mathrm{deg}(d)\leq[g/2]}}\frac{\mu(d)}{|d|}\prod_{P\mid d}\frac{1}{|P|+1}\left(\frac{q^{-[g/2]+1}}{1-q^{-1}}\right)+O(q^{g/2}). 
\end{multline}

Now for the first term of \eqref{eq:4.4} we can use Lemma \ref{lem:lem3} and for the second term we can use Lemma \ref{lem:lem5} and so we end up with the following formula for the square terms,

\begin{multline}\label{eq:eq4.5}
\sum_{n=0}^{g}q^{-n}\sum_{D\in\mathcal{H}_{2g+1,q}}\sum_{\substack{f \ \mathrm{monic} \\ \mathrm{deg}(f)=n \\ f=l^{2}}}\chi_{D}(f)=|D|\prod_{\substack{P \ \mathrm{monic}\\ \mathrm{irreducible}}}\left(1-\frac{1}{|P|^{2}(|P|+1)}\right)\\
-|D|q^{-[g/2]-1}\prod_{\substack{P \ \mathrm{monic}\\ \mathrm{irreducible}}}\left(1-\frac{1}{|P|(|P|+1)}\right)+O(q^{g}).
\end{multline}

Substituting \eqref{eq:eq4.5} in \eqref{eq:4.3} we have that,

\begin{multline}\label{eq:4.6}
\sum_{D\in\mathcal{H}_{2g+1,q}}\sum_{n=0}^{g}\sum_{\substack{f \ \mathrm{monic} \\ \mathrm{deg}(f)=n}}\chi_{D}(f)q^{-n}=|D|\prod_{\substack{P \ \mathrm{monic}\\ \mathrm{irreducible}}}\left(1-\frac{1}{|P|^{2}(|P|+1)}\right)\\
-|D|q^{-[g/2]-1}\prod_{\substack{P \ \mathrm{monic}\\ \mathrm{irreducible}}}\left(1-\frac{1}{|P|(|P|+1)}\right)+O((2q)^{g}).
\end{multline}

For the second term in the right--hand side of \eqref{eq:4.1} we mimic the calculations above to end with,

\begin{multline}
\label{eq:4.7}
q^{-g}\sum_{D\in\mathcal{H}_{2g+1,q}}\sum_{m=0}^{g-1}\sum_{\substack{f \ \mathrm{monic} \\ \mathrm{deg}(f)=m}}\chi_{D}(f)\\
=\frac{|D|q^{-g}}{\zeta_{A}(2)}\sum_{\substack{d \ \mathrm{monic} \\ \mathrm{deg}(d)\leq[(g-1)/2]}}\frac{\mu(d)}{|d|}\prod_{P\mid d}\frac{1}{|P|+1}\sum_{\mathrm{deg}(d)\leq n\leq[(g-1)/2]}q^{n}\ \ \ \ \ \ \ \ \ \ \ \ \ \ \ \ \ \ \ \ \ \ \ \ \ \ \ \ \ \ \ \ \ \ \ \ \ \ \ \ \ \ \ \ \ \ \ \\
\ \ \ \ \ \ \ \ \ \ \ \ \ \ \ \ \ \ \ \ \ \ \ \ \ \ \ \ \ \ \ \ \ \ \ \ \ \ \ \ \ \ \ \ \ \ \ \ \ \ \ \ \ \ \ \ \ \ \ \ \ \ +O(q^{[(g-1)/2]})+O((2q)^{g})\\ \\
=\frac{|D|q^{-g}}{\zeta_{A}(2)}\sum_{\substack{d \ \mathrm{monic} \\ \mathrm{deg}(d)\leq[(g-1)/2]}}\frac{\mu(d)}{|d|}\prod_{P\mid d}\frac{1}{|P|+1}\left(\frac{q^{\mathrm{deg}(d)}}{1-q}\right)\ \ \ \ \ \ \ \ \ \ \ \ \ \ \ \ \ \ \ \ \ \ \ \ \ \ \ \ \ \ \ \ \ \ \ \ \ \ \ \ \ \ \ \ \ \ \ \\
-\frac{|D|q^{-g}}{\zeta_{A}(2)}\sum_{\substack{d \ \mathrm{monic} \\ \mathrm{deg}(d)\leq[(g-1)/2]}}\frac{\mu(d)}{|d|}\prod_{P\mid d}\frac{1}{|P|+1}\left(\frac{q^{[(g-1)/2]+1}}{1-q}\right)+O((2q)^{g}),
\end{multline}
where the error $O((2q)^{g})$ arises when we consider $f\neq\square$ and using part 2 of Lemma \ref{lem:nonsquare}.

For the first term in \eqref{eq:4.7} we use the bound given in Lemma \ref{lem:lem6} and for the second term we have,

\begin{multline}\label{eq:4.8}
\frac{|D|q^{-g}}{\zeta_{A}(2)}\sum_{\substack{d \ \mathrm{monic} \\ \mathrm{deg}(d)\leq[(g-1)/2]}}\frac{\mu(d)}{|d|}\prod_{P\mid d}\frac{1}{|P|+1}\left(\frac{q^{[(g-1)/2]+1}}{1-q}\right)\\
=\frac{|D|q^{-g}}{\zeta_{A}(2)}\left(\sum_{d \ \mathrm{monic}}\frac{\mu(d)}{|d|}\prod_{P\mid d}\frac{1}{|P|+1}\left(\frac{q^{[(g-1)/2]+1}}{1-q}\right)\right)\\
-\frac{|D|q^{-g}}{\zeta_{A}(2)}\left(\sum_{\substack{d \ \mathrm{monic} \\ \mathrm{deg}(d)>[(g-1)/2]}}\frac{\mu(d)}{|d|}\prod_{P\mid d}\frac{1}{|P|+1}\left(\frac{q^{[(g-1)/2]+1}}{1-q}\right)\right).
\end{multline}
And we can use part 2 of the Lemma \ref{lem:lem5}, and so we have that,

\begin{multline}
\frac{|D|q^{-g}}{\zeta_{A}(2)}\sum_{\substack{d \ \mathrm{monic} \\ \mathrm{deg}(d)\leq[(g-1)/2]}}\frac{\mu(d)}{|d|}\prod_{P\mid d}\frac{1}{|P|+1}\left(\frac{q^{[(g-1)/2]+1}}{1-q}\right)=\\
\frac{|D|q^{-g}q^{[(g-1)/2]+1}}{\zeta_{A}(2)(1-q)}\prod_{\substack{P \ \mathrm{monic} \\ \mathrm{irreducible}}}\left(1-\frac{1}{|P|(|P|+1)}\right)+O((2q)^{g}).
\end{multline}

So we can conclude that,
\begin{multline}\label{eq:4.10}
q^{-g}\sum_{D\in\mathcal{H}_{2g+1,q}}\sum_{m=0}^{g-1}\sum_{\substack{f \ \mathrm{monic} \\ \mathrm{deg}(f)=m}}\chi_{D}(f)=\\
-\frac{|D|q^{-g}q^{[(g-1)/2]+1}}{\zeta_{A}(2)(1-q)}\prod_{\substack{P \ \mathrm{monic} \\ \mathrm{irreducible}}}\left(1-\frac{1}{|P|(|P|+1)}\right)+O((2q)^{g}).
\end{multline}

Putting together the equations \eqref{eq:4.6} and \eqref{eq:4.10} and factoring $|D|$ we have that the proof of Theorem \ref{thm:mainthm1} is complete.
\begin{flushright}
$\square$
\end{flushright}

\section*{Acknowledgments}

I would like to thank Professor Jon Keating for introducing me to the subject and problems tackled in this paper, as well as for his useful advice during the course of the research. I would also like to thank Professors Brian Conrey, Ze\'{e}v Rudnick and Nina Snaith for numerous interesting discussions.

The author also wishes to thank the anonymous referees for valuable comments and suggestions that helped improve the presentation of the paper.






\end{document}